%
\documentclass[11pt,a4paper]{amsart}


\usepackage{mathtools}
\usepackage{amsthm}
\usepackage{amssymb}
\usepackage{hyperref}
\usepackage{caption}



 \usepackage{tikz}
 \usetikzlibrary{decorations.pathmorphing}
 \usetikzlibrary{decorations.pathreplacing}

\theoremstyle{definition}
\newtheorem{definition}{Definition}[section]

\newtheorem{example}[definition]{Example}

\theoremstyle{plain}

\newtheorem{lemma}[definition]{Lemma}
\newtheorem{proposition}[definition]{Proposition}
\newtheorem{theorem}[definition]{Theorem}

\numberwithin{equation}{section}


\DeclareMathOperator{\im}{Im}

\tikzset{
  x=10mm,
  y=10mm,
  treenode/.style={
    circle,
    minimum size=1mm,
    inner sep=0,
    fill=gray,
  },
  edgelabel/.style={
    node font=\small,
    inner sep =.5mm,
  },
  columnlabel/.style={
    anchor=north,
  }
}

\tikzset{
  vertex/.style={
    circle,
    minimum size=3mm,
    fill,
    inner sep=0,
    outer sep=0,
  },
  edge/.style={
    line width=.5mm,
  }
}

\allowdisplaybreaks

\begin{document}

\title{Commutative nilpotent transformation semigroups}

\author{Alan J. Cain}
\address[A.J. Cain]{%
Center for Mathematics and Applications (NOVA Math)\\
NOVA School of Science and Technology\\
NOVA University of Lisbon\\
2829--516 Caparica\\
Portugal
}
\email{%
a.cain@fct.unl.pt
}

\author{António Malheiro}
\address[A. Malheiro]{%
Center for Mathematics and Applications (NOVA Math) \& Department of Mathematics\\
NOVA School of Science and Technology\\
NOVA University of Lisbon\\
2829--516 Caparica\\
Portugal
}
\email{%
ajm@fct.unl.pt
}

\author{Tânia Paulista}
\address[T. Paulista]{%
Center for Mathematics and Applications (NOVA Math) \& Department of Mathematics\\
NOVA School of Science and Technology\\
NOVA University of Lisbon\\
2829--516 Caparica\\
Portugal
}
\email{%
t.paulista@campus.fct.unl.pt
}
\thanks{This work is funded by national funds through the FCT -- Fundação para a Ciência e a Tecnologia, I.P., under the scope of the projects UIDB/00297/2020 and UIDP/00297/2020 (Center for Mathematics and Applications)}

\thanks{The third author is funded by national funds through the FCT -- Fundação para a Ciência e a Tecnologia, I.P., under the scope of the studentship 2021.07002.BD}

\subjclass[2020]{Primary 20M20, 20M14; Secondary 05C25, 05C05}

\begin{abstract}
Cameron, et al. determined the maximum size of a null subsemigroup of the full transformation semigroup $\mathcal{T}(X)$ on a finite set $X$ and provided a description of the null semigroups that achieve that size. In this paper we extend the results on null semigroups (which are commutative) to commutative nilpotent semigroups. Using a mixture of algebraic and combinatorial techniques, we show that, when $X$ is finite, the maximum order of a commutative nilpotent subsemigroup of $\mathcal{T}(X)$ is equal to the maximum order of a null subsemigroup of $\mathcal{T}(X)$ and we prove that the largest commutative nilpotent subsemigroups of $\mathcal{T}(X)$ are the null semigroups previoulsy characterized by Cameron, et al..
\end{abstract}

\maketitle

\section{Introduction}

This paper focuses on commutative nilpotent subsemigroups of $\mathcal{T}(X)$, the semigroup of full transformations over $X$. More specifically, when $X$ if finite, we determine the maximum size of a commutative nilpotent subsemigroup of $\mathcal{T}(X)$ and we specify which semigroups achieve this maximum size.

A similar result already exists for null subsemigroups of $\mathcal{T}(X)$ when $X$ is finite \cite{Null_semigroups}. Cameron et al. proved that the maximum size of a null subsemigroup of $\mathcal{T}(X)$ is given by $\max\{t^{|X|-t}:t\in\{1,\ldots,|X|\}\}$ and characterized the null semigroups of maximum order.

Null semigroups are a special case of nilpotent semigroups. Additionally, they are also commutative. For this reason, our main result extends the one about null subsemigroups of $\mathcal{T}(X)$. In fact, we prove that the maximum size of a commutative nilpotent semigroup is equal to the maximum size of a null semigroup. Moreover, we show that the commutative nilpotent semigroups of maximum size are actually the null semigroups described in \cite{Null_semigroups}.

Our results can be compared to the ones obtained by Biggs, Rankin and Reis on nilpotent subsemigroups of $\mathcal{T}(X)$, for a finite set $X$ --- in \cite{Non_commutative_nilpotent_semigroups} they proved that the maximum size of these semigroups is $(|X|-1)!$. We show that, when $|X|\geqslant 4$, non-commutative nilpotent subsemigroups of $\mathcal{T}(X)$ can be much larger than the commutative ones.

Some other authors also tried to find the largest groups and semigroups that satisfy a certain property. For instance, Burns and Goldsmith \cite{Symmetric_group} characterized the largest abelian subgroups of the symmetric group and Vdovin \cite{Alternating_group} characterized the largest abelian subgroups of the alternating group. Gray and Mitchell \cite{Largest_subsemigroups_transformation} determined the maximum order of several subsemigroups of the semigroup $\mathcal{T}_n$ of full transformations over $\{1,\ldots,n\}$, namely the left and right zero semigroups, the completely simple semigroups and the inverse semigroups. Araújo, Bentz and Janusz \cite{Commuting_graph_I(X)} characterized, for a finite set $X$, the largest commutative inverse subsemigroups of the symmetric inverse semigroup $\mathcal{I}(X)$, as well as the largest commutative nilpotent subsemigroups of $\mathcal{I}(X)$. 

This paper is organized in the following way. We begin with Section \ref{Preliminaries}, where we provide some background needed to understand Section \ref{Commutative nilpotent subsemigroups}. This includes some results regarding null subsemigroups of $\mathcal{T}(X)$ of maximum order proved in \cite{Null_semigroups}.

In Section \ref{Commutative nilpotent subsemigroups} we describe the largest commutative nilpotent subsemigroups of $\mathcal{T}(X)$ and determine its order. In the process we also prove that for each commutative nilpotent subsemigroup of $\mathcal{T}(X)$ there is a null subsemigroup of $\mathcal{T}(X)$ of the same size.

\section{Preliminaries} \label{Preliminaries}

Let $\mathcal{T}(X)$ be the semigroup of full transformations on the set $X$. Throughout this paper, $X$ will denote a finite set.

Let $S$ be a semigroup with a zero $0$. We say that $S$ is a {\it nilpotent semigroup} is there exists $m\in\mathbb{N}$ such that the product of any $m$ elements of $S$ is equal to the zero $0$. This is equivalent to write that $S^m=\{0\}$ for some $m\in\mathbb{N}$. If $S^2=\{0\}$ then we say that $S$ is a {\it null semigroup}.

In \cite{Null_semigroups} Cameron et al. introduced two functions $\xi,\alpha:\mathbb{N}\rightarrow\mathbb{N}$ which, for each $n\in\mathbb{N}$, are defined in the following way
\begin{displaymath}(n)\xi=\max\{t^{n-t}:t\in\{1,\ldots,n\}\}\end{displaymath}
and
\begin{displaymath}(n)\alpha=\max\{t\in\{1,\ldots,n\}:t^{n-t}=(n)\xi\}.\end{displaymath}

The next lemma provides some inequalities satisfied by the function $\xi$ described above.

\begin{lemma}[{\cite[Lemma 2.4]{Null_semigroups}}]\label{inequalities xi}
\begin{enumerate}
    \item We have $(1)\xi=(2)\xi$ and \allowbreak $(n)\xi<(n+1)\xi$ for all $n\geqslant 2$;
    \item $(n)\xi(m)\xi\leqslant(n+m-1)\xi$ for all $n,m\in\mathbb{N}$.
\end{enumerate}
\end{lemma}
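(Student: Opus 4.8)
The plan is to lean on the fact that $(n)\xi$ is defined as a \emph{maximum} of the quantities $t^{n-t}$ over $t\in\{1,\dots,n\}$. Consequently, any single admissible value of $t$ furnishes a lower bound for $(n)\xi$, so both parts reduce to choosing a clever witness $t$ and then doing elementary exponent bookkeeping; the equality $(1)\xi=(2)\xi$ is settled by direct evaluation. Throughout I would use that $(n)\xi\geqslant 1$, since the choice $t=1$ always gives $1^{n-1}=1$.

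For part (1), I would first compute $(1)\xi=\max\{1^{0}\}=1$ and $(2)\xi=\max\{1^{1},2^{0}\}=1$, giving the claimed equality. For the strict monotonicity, the key observation is that for every $n\geqslant 2$ the maximum defining $(n)\xi$ is attained at some index $t^{*}\geqslant 2$: indeed the term $t=1$ contributes only $1^{n-1}=1$, whereas the term $t=2$ contributes $2^{n-2}\geqslant 1$, so some maximizer has $t^{*}\geqslant 2$. Feeding this same $t^{*}$ into the maximum that defines $(n+1)\xi$, I would obtain
\[
(n+1)\xi\geqslant (t^{*})^{(n+1)-t^{*}}=t^{*}\cdot (t^{*})^{n-t^{*}}=t^{*}\cdot (n)\xi\geqslant 2\,(n)\xi>(n)\xi,
\]
using $(n)\xi\geqslant 1>0$ in the last step.

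For part (2), I would write $a=(n)\alpha$ and $b=(m)\alpha$, so that $(n)\xi=a^{\,n-a}$ and $(m)\xi=b^{\,m-b}$ with $a\in\{1,\dots,n\}$ and $b\in\{1,\dots,m\}$. Since both $(n+m-1)\xi$ and the product $(n)\xi(m)\xi$ are symmetric in $n$ and $m$, I may assume without loss of generality that $a\geqslant b$. The idea is then to use $t=a$ as a witness for the maximum defining $(n+m-1)\xi$, which is legitimate because $a\leqslant n\leqslant n+m-1$. This yields
\[
(n+m-1)\xi\geqslant a^{\,(n+m-1)-a}=a^{\,n-a}\cdot a^{\,m-1}\geqslant a^{\,n-a}\cdot b^{\,m-1}\geqslant a^{\,n-a}\cdot b^{\,m-b}=(n)\xi\,(m)\xi,
\]
where the middle inequality uses $a\geqslant b\geqslant 1$ and the final inequality uses $b\geqslant 1$ together with $m-1\geqslant m-b$.

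The only step that requires genuine care is the choice of witness in part (2): one must take the base of the witness to be the \emph{larger} of the two optimal indices (hence the symmetry reduction to $a\geqslant b$) and then verify the exponent estimate $a^{\,m-1}\geqslant b^{\,m-b}$, which I would split into monotonicity in the base followed by monotonicity in the exponent. The remaining arguments are routine, each amounting to bounding the maximum from below by a single well-chosen term.
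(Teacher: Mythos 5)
Your proof is correct. Note that the paper you are comparing against does not actually prove this lemma: it is imported verbatim, with citation, as Lemma~2.4 of Cameron et al.\ (the reference \texttt{Null\_semigroups}), so there is no in-paper argument to measure yours against. Your strategy --- bounding each maximum from below by a single well-chosen witness $t$ --- is the natural one, and every step checks out: the witness $t^{*}\geqslant 2$ in part~(1) exists because the $t=2$ term already dominates the $t=1$ term, and in part~(2) the witness $a=\max\{(n)\alpha,(m)\alpha\}$ is admissible since $a\leqslant n\leqslant n+m-1$, with the chain $a^{\,m-1}\geqslant b^{\,m-1}\geqslant b^{\,m-b}$ justified exactly as you say.
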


Theorem~\ref{maximum size null semigroup} shows that the size of a largest null subsemigroup of $\mathcal{T}(X)$ depends on the function $\varepsilon$.

\begin{theorem} [{\cite[Theorem 4.4]{Null_semigroups}}]\label{maximum size null semigroup}
The maximum size of a null subsemigroup of $\mathcal{T}(X)$ is $(|X|)\xi$.
\end{theorem}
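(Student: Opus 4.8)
The plan is to prove the two inequalities separately: first exhibit a null subsemigroup of order $(|X|)\xi$, and then show that no null subsemigroup can be larger. Throughout write $n=|X|$ and let transformations act on the right, so that $x(fg)=(xf)g$.

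For the lower bound I would fix $t\in\{1,\ldots,n\}$ with $t^{n-t}=(n)\xi$ (equivalently, $t=(n)\alpha$), partition $X=A\sqcup B$ with $|A|=t$, pick a point $a_0\in A$, and take
\[
S=\{\,f\in\mathcal{T}(X): af=a_0\text{ for all }a\in A,\ Bf\subseteq A\,\}.
\]
A direct check shows that for any $f,g\in S$ the product $fg$ is the constant map with image $\{a_0\}$: a point of $A$ goes to $a_0\in A$ under $f$ and then stays at $a_0$ under $g$, while a point of $B$ lands in $A$ under $f$ and is then sent to $a_0$ by $g$. Hence $S^2=\{z\}$, where $z$ is that constant map, so $S$ is null; and since the action on $A$ is fixed while each of the $|B|=n-t$ points of $B$ may be sent to any of the $t$ points of $A$, we get $|S|=t^{\,n-t}=(n)\xi$.

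For the upper bound --- the harder direction --- let $S$ be an arbitrary null subsemigroup with zero $z$. From $S^2=\{z\}$ we get $fz=zf=z$ for every $f\in S$ and $zz=z$, so $z$ is an idempotent transformation. The crucial step is to isolate the set of points whose image under $S$ is \emph{completely determined},
\[
K=\{\,w\in X: wg=wz\text{ for all }g\in S\,\},
\]
and to show that $\im f\subseteq K$ for every $f\in S$. Indeed, writing $w=xf$, the identities $fz=z$ and $fg=z$ give $wz=x(fz)=xz$ and $wg=x(fg)=xz$ for every $g\in S$, so $wg=wz$ and thus $w\in K$. Consequently every $f\in S$ maps $X$ into $K$; moreover on $K$ itself $f$ is forced to coincide with $z$, since $w\in K$ means $wf=wz$.

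Therefore each $f\in S$ is determined by its restriction $f|_{X\setminus K}\colon X\setminus K\to K$, whence $|S|\leqslant |K|^{|X\setminus K|}=k^{\,n-k}$ with $k=|K|$. Since $z$ has nonempty image and $\im z\subseteq K$, we have $1\leqslant k\leqslant n$, so $k^{\,n-k}\leqslant\max\{t^{\,n-t}:t\in\{1,\ldots,n\}\}=(n)\xi$; combined with the construction this gives the claimed maximum. I expect the main obstacle to be finding the right invariant to count against: the definition of $K$ and the verification that all images fall into it are exactly what make the bound $k^{\,n-k}$, and hence $(n)\xi$, emerge cleanly, without even invoking the inequalities of Lemma~\ref{inequalities xi}. (Alternatively, one could first use $fz=z$ to show that every $f$ preserves the fibres of $z$, reducing to the case of a constant zero and then recombining the per-fibre bounds via Lemma~\ref{inequalities xi}(2).)
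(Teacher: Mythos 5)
Your proof is correct, but note that the paper itself offers no argument to compare against: Theorem~\ref{maximum size null semigroup} is imported verbatim from Cameron et al.\ \cite{Null_semigroups} and used as a black box. Taken on its own terms, your write-up is a complete and self-contained proof. The lower-bound construction is exactly the family of extremal semigroups that Theorem~\ref{null semigroups of maximum size}(1) describes, so that part is certainly in the spirit of the source. The upper bound is the more interesting part: your invariant set $K=\{w\in X: wg=wz\text{ for all }g\in S\}$ does all the work in one stroke, since $fg=z$ forces $\im f\subseteq K$ and $f|_K=z|_K$, giving the injection $f\mapsto f|_{X\setminus K}$ into $K^{X\setminus K}$ and hence $|S|\leqslant k^{\,n-k}\leqslant (n)\xi$ directly, with no need for Lemma~\ref{inequalities xi}(2). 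This is genuinely slicker than the route suggested by the surrounding machinery (and by your own parenthetical alternative), namely splitting $X$ into the fibres $xz^{-1}$ of the zero, bounding each factor separately, and recombining via the superadditivity inequality $(n)\xi(m)\xi\leqslant(n+m-1)\xi$ --- which is precisely the pattern the present paper follows in Theorem~\ref{|S|<xi(n), zero rank > 1} for the harder nilpotent case. The one small check worth making explicit is the edge case $K=X$ (e.g.\ when the zero is the identity on a two-element set), where your bound degenerates to $|S|\leqslant 1$; it still works because $k^{\,n-k}=n^{0}=1\leqslant(n)\xi$, and $K\supseteq\im z$ guarantees $k\geqslant 1$ in general.
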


Theorem~\ref{null semigroups of maximum size} uses the function $\alpha$ to characterize all the null subsemigroups of $\mathcal{T}(X)$ that achieve the size $(|X|)\xi$ mentioned in the previous theorem.

\begin{theorem} [{\cite[Subsection 4.1]{Null_semigroups}}]\label{null semigroups of maximum size}
Let $S$ be a null subsemigroup of $\mathcal{T}(X)$ such that $|S|=(|X|)\xi$. Let $t=(|X|)\alpha$.
\begin{enumerate}
    \item If the zero of $S$ has rank $1$, then
    \begin{displaymath}
    S=\{\beta \in \mathcal{T}(X): \{x_1,\ldots,x_t\}\beta=\{x_1\} \textrm{ and } \im\beta\subseteq\{x_1,\ldots,x_t\}\}
\end{displaymath}
for some $x_1,\ldots,x_t\in X$.

    \item If the zero of $S$ has rank at least $2$, then $|X|=2$ and the only transformation of $S$ is the identity.
\end{enumerate}

\end{theorem}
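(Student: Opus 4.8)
The plan is to analyze $S$ through its \emph{total image} $I=\bigcup_{\gamma\in S}\im\gamma$ together with the zero $0$, which, being idempotent in $\mathcal{T}(X)$ (as $0\cdot 0=0$), fixes $\im 0$ pointwise. The crucial first observation is that every element of $S$ agrees with $0$ on $I$: if $z=(x)\beta\in I$ for some $x\in X$ and $\beta\in S$, then for any $\delta\in S$ we have $(z)\delta=(x)(\beta\delta)=(x)0$, which is independent of $\delta$ and in particular equals $(z)0$. Since moreover $\im\gamma\subseteq I$ for every $\gamma\in S$, each $\gamma$ is completely determined by its restriction $\gamma|_{X\setminus I}\colon X\setminus I\to I$. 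Writing $i=|I|$ and $n=|X|$, the map $\gamma\mapsto\gamma|_{X\setminus I}$ is therefore injective, so $|S|\leqslant i^{\,n-i}$. As $i\in\{1,\dots,n\}$ we have $i^{\,n-i}\leqslant(n)\xi$, and the hypothesis $|S|=(n)\xi$ then forces $i^{\,n-i}=(n)\xi$ and $S$ to equal the \emph{full} set $S_{\max}=\{\gamma\in\mathcal{T}(X):\gamma|_I=0|_I\text{ and }\im\gamma\subseteq I\}$, whose cardinality is exactly $i^{\,n-i}$.

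The two cases of the theorem now correspond to whether $0|_I$ is constant. If $0$ has rank $1$, then $0$ is the constant map onto $\{x_1\}$ for some $x_1$, which lies in $I$; hence $0|_I$ is constant and $S_{\max}=\{\gamma:(I)\gamma=\{x_1\}\text{ and }\im\gamma\subseteq I\}$. This is precisely the claimed form with $I=\{x_1,\dots,x_t\}$, and one checks directly that it is genuinely null (for $\beta,\gamma\in S_{\max}$ one has $\im\beta\subseteq I$, so $\beta\gamma$ is again the constant map onto $\{x_1\}$), consistently with $S=S_{\max}$. To identify $t=i$ with $(n)\alpha$ I would argue that $i$ is a maximizer of $t\mapsto t^{\,n-t}$ and that, because $g(t)=(n-t)\ln t$ is strictly concave, this maximizer over $\{1,\dots,n\}$ is unique once $n\geqslant 3$; the only tie, between $t=1$ and $t=2$, occurs at $n=2$, which is checked by hand (there the formula with $t=(2)\alpha=2$ still returns the single constant map).

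If instead $0$ has rank at least $2$, then $\im 0$ contains two distinct points $y_1,y_2$, both fixed by $0$, so $0|_I$ is \emph{not} constant. The plan is to show this is incompatible with $S_{\max}$ being null unless $X=I$. Indeed, if some $x_0\in X\setminus I$ existed, then $S_{\max}$ would contain maps $\beta,\gamma$ agreeing with $0$ on $I$ but with $(x_0)\beta=y_1$ and $(x_0)\gamma=y_2$. Since $0\in S_{\max}$ and $S=S_{\max}$ is null, we would need $\beta\cdot 0=0=\gamma\cdot 0$; evaluating at $x_0$ gives $y_1=(x_0)0$ and $y_2=(x_0)0$, whence $y_1=y_2$, a contradiction. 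Therefore $X=I$, so $S_{\max}=\{0\}$ and $|S|=1=(n)\xi$. As $(n)\xi\geqslant 2$ for $n\geqslant 3$, this is possible only when $n=2$, where the unique rank-$2$ idempotent is the identity, yielding $S=\{\mathrm{id}\}$ as claimed.

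The step I expect to be the main obstacle is the bookkeeping around the total image $I$: carefully justifying that every element of $S$ coincides with $0$ on $I$, that the resulting injection gives the sharp count $i^{\,n-i}$ (rather than something larger), and that equality forces $S=S_{\max}$. This is where the argument really lives, and it is also where one must stay alert to the small cases $n\leqslant 2$, which make both the identification $|I|=(n)\alpha$ and the rank bookkeeping slightly delicate.
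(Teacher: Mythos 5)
Your argument is essentially correct, and a genuine comparison with ``the paper's own proof'' is not possible here: this statement is imported from Cameron et al.\ (cited as Subsection~4.1 of that paper) and is not proved in the present manuscript. Your route --- collapse everything onto the total image $I=\bigcup_{\gamma\in S}\im\gamma$, observe that nullity forces every $\gamma\in S$ to agree with the zero on $I$ (via $(z)\delta=(x)(\beta\delta)=(x)0=(z)0$ for $z=(x)\beta$), inject $S$ into $I^{X\setminus I}$ to get $|S|\leqslant i^{\,n-i}\leqslant(n)\xi$, and let the equality hypothesis force $S=S_{\max}$ --- is clean and self-contained, and your treatment of the rank~$\geqslant 2$ case (non-constancy of $0|_I$ forcing $X=I$, hence $|S|=1$ and $n=2$) is sound.

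The one step you should tighten is the identification $|I|=(n)\alpha$. Strict concavity of $g(t)=(n-t)\ln t$ does \emph{not} by itself give uniqueness of the maximizer over $\{1,\dots,n\}$; it only shows that the set of maximizers consists of at most two \emph{adjacent} integers. You assert, but do not prove, that the only adjacent tie occurs at $n=2$. The missing line is: a tie $t^{\,n-t}=(t+1)^{\,n-t-1}$ with $t\geqslant 2$ is impossible, since for $n-t-1\geqslant 1$ both sides exceed $1$ yet have coprime radicals (as $\gcd(t,t+1)=1$), while for $n-t-1=0$ the right side is $1$ and the left is $t\geqslant 2$; and a tie with $t=1$ forces $(n-2)\ln 2=0$, i.e.\ $n=2$. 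With that inserted, your case analysis (including the $n=2$, rank-$1$ check that the formula with $t=(2)\alpha=2$ still describes the single constant map) goes through and the proof is complete.
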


\section{Commutative nilpotent subsemigroups of $\mathcal{T}(X)$ of maximum order} \label{Commutative nilpotent subsemigroups}

In this section we investigate commutative nilpotent subsemigroups of $\mathcal{T}(X)$. More specifically, we demonstrate that the maximum order of these semigroups is $(|X|)\xi$ --- the maximum order of a null subsemigroup of $\mathcal{T}(X)$. Additionally, we prove that the largest commutative nilpotent subsemigroups of $\mathcal{T}(X)$ are null semigroups --- the ones described in Theorem~\ref{null semigroups of maximum size}. Furthermore, we show that given a commutative nilpotent subsemigroup of $\mathcal{T}(X)$ we can always find a null subsemigroup of $\mathcal{T}(X)$ of the same size.


\begin{proposition}\label{property nilpotent semigroups}
Let $S$ be a nilpotent subsemigroup of $\mathcal{T}(X)$. Assume that $e$ is the zero of $S$. Then
\begin{enumerate}
    \item $x\beta=x$ for all $x\in\im e$ and $\beta\in S$;
    \item $y\beta\in xe^{-1}\setminus\{y\}$ for all $x\in\im e$, $y\in xe^{-1}\setminus\{x\}$ and $\beta\in S$.
\end{enumerate}
\end{proposition}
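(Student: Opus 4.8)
The plan is to exploit two elementary facts about the zero $e$. First, being the zero, $e$ is idempotent ($ee = e$), and an idempotent transformation fixes its image pointwise, so $xe = x$ for every $x \in \im e$. Second, $e$ absorbs every element on both sides: $e\beta = \beta e = e$ for all $\beta \in S$. Working throughout with the convention that $x\beta$ denotes the image of $x$ under $\beta$ and that composition is read left to right, both parts of the proposition should follow by evaluating these two identities at suitable points.

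For part (1) I would take $x \in \im e$, so that $xe = x$, and then compute
\begin{displaymath}
x\beta = (xe)\beta = x(e\beta) = xe = x,
\end{displaymath}
where the third equality is the left-absorption $e\beta = e$. This settles (1) using only the zero axioms; the full strength of nilpotency is not required here.

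For part (2), fix $x \in \im e$ and $y \in xe^{-1}\setminus\{x\}$, so that $ye = x$ and $y \neq x$. To show $y\beta \in xe^{-1}$ I would apply right-absorption: $(y\beta)e = y(\beta e) = ye = x$, which is exactly the statement that $y\beta$ lies in $xe^{-1}$. The remaining point, and the only genuinely nilpotent one, is that $y\beta \neq y$. Here I would invoke nilpotency of $S$: since $S^m = \{e\}$ for some $m \in \mathbb{N}$, the power $\beta^m$ is a product of $m$ elements of $S$ and hence equals $e$. If we had $y\beta = y$, then iterating would give $y = y\beta^m = ye = x$, contradicting $y \neq x$; therefore $y\beta \neq y$, and combining with the previous paragraph yields $y\beta \in xe^{-1}\setminus\{y\}$.

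The only step that uses more than the zero-element identities is the inequality $y\beta \neq y$ in part (2), and that is precisely where the nilpotency hypothesis enters, through $\beta^m = e$. Everything else is a direct evaluation of the absorption and idempotency relations, so I do not anticipate any real obstacle beyond keeping the composition convention straight.
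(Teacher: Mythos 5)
Your proposal is correct and follows essentially the same route as the paper: part (1) by evaluating the absorption identity $e\beta = e$ at a point of $\im e$ (the paper writes $x = ze$ and computes $ze\beta = ze$, which is your $xe = x$ step in slightly different clothing), and part (2) by using $\beta e = e$ for membership in $xe^{-1}$ and $\beta^m = e$ to rule out $y\beta = y$. No issues.
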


\begin{proof}
Let $x\in \im e$, $y\in xe^{-1}\setminus\{x\}$ and $\beta\in S$. 

We have $\beta e=e\beta=e$ because $e$ is the zero of $S$. Also, since $x\in\im e$, there exists $z\in X$ such that $ze=x$. Hence $x\beta=ze\beta=ze=x$, which proves (1).

Since $S$ is a nilpotent semigroup, there exists $m \in\mathbb{N}$ such that $S^m=\{e\}$. Then $y\beta^m=ye=x$, and so $y\beta\neq y$. Furthermore, $y\beta e=ye=x$, and so $y\beta\in xe^{-1}$. Consequently, $y\beta\in xe^{-1}\setminus\{y\}$, which proves (2).
\end{proof}

\begin{proposition} \label{exists x not in union of images}
Let $S$ be a nilpotent subsemigroup of $\mathcal{T}(X)$ whose zero has rank $1$. If $|X| \geqslant 2$, then $\bigcup_{\beta\in S} \im \beta \subsetneq X$.
\end{proposition}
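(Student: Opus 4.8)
The plan is to exhibit a single point of $X$ that escapes every image, by organising $X$ into a partial order determined by the action of $S$ and then choosing a maximal element.

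Since the zero $e$ has rank $1$, write $\im e=\{x_0\}$; then $ye=x_0$ for every $y\in X$, and by Proposition~\ref{property nilpotent semigroups}(1) the point $x_0$ is fixed by every element of $S$. The first thing I would isolate is a \emph{fixed-point lemma}: if some $\beta\in S$ fixes a point $z$, that is $z\beta=z$, then necessarily $z=x_0$. This is immediate from nilpotency: choosing $m$ with $S^m=\{e\}$ gives $z=z\beta=z\beta^2=\cdots=z\beta^m=ze=x_0$. The whole argument rests on this observation, namely that genuine elements of $S$ have no fixed points other than $x_0$.

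Next I would introduce the relation on $X$ defined by $x\preceq y$ if and only if $x=y$ or $x=y\beta$ for some $\beta\in S$. Reflexivity is built in, and transitivity follows from $S$ being closed under composition. The step I expect to be the main obstacle is \emph{antisymmetry}, which is where the content lies: if $x\preceq y$ and $y\preceq x$ with $x\neq y$, then $x=y\beta$ and $y=x\gamma$ for honest $\beta,\gamma\in S$ (the equality forces them to be non-identity), so $x=x(\gamma\beta)$ with $\gamma\beta\in S$; the fixed-point lemma then yields $x=x_0$, and symmetrically $y=x_0$, contradicting $x\neq y$. Thus $\preceq$ is a partial order, and since $ye=x_0$ for all $y$ (witnessed by $e\in S$), the point $x_0$ is its global minimum.

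Finally I would exploit finiteness of $X$. Because $|X|\geqslant 2$ there is some $y_0\neq x_0$, and $x_0\prec y_0$ shows that $x_0$ is not maximal; hence any $\preceq$-maximal element $w$ satisfies $w\neq x_0$. I claim $w$ lies in no image: if $w=y\beta$ for some $y\in X$ and $\beta\in S$, then $w\preceq y$, so maximality of $w$ forces $y=w$, making $w$ a fixed point of $\beta$, and the fixed-point lemma gives $w=x_0$, a contradiction. Therefore $w\notin\im\beta$ for every $\beta\in S$, so $\bigcup_{\beta\in S}\im\beta\subseteq X\setminus\{w\}\subsetneq X$, as required.
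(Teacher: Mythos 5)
Your proof is correct, and it reorganizes the argument in a genuinely different way from the paper. The paper argues by contradiction: assuming $\bigcup_{\beta\in S}\im\beta=X$, it builds a backward chain $x_n=x_{n+1}\beta_n$ staying inside $X\setminus\{x_0\}$, uses pigeonhole on the finite set $X$ to find a cycle, and then iterates the resulting product $m$ times to force the cycled point to equal $x_0$. You instead isolate the fixed-point lemma ($z\beta=z$ implies $z=x_0$) explicitly, use it to verify antisymmetry of the reachability preorder $x\preceq y\iff x=y$ or $x=y\beta$, and then read off the conclusion directly from a $\preceq$-maximal element, which by construction lies outside every image. The combinatorial engine is the same in both arguments --- follow preimages, note that any cycle yields a fixed point of a product in $S$, and invoke nilpotency --- but your packaging is direct rather than by contradiction, exhibits the missing element constructively (it is any maximal element of the order), and avoids the explicit sequence bookkeeping; the paper's version is more elementary in that it needs no order-theoretic vocabulary, only finiteness and the pigeonhole principle. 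One small remark: your appeal to Proposition~\ref{property nilpotent semigroups}(1) for ``$x_0$ is fixed by every element of $S$'' is true but never actually used downstream; the only facts you need are $ye=x_0$ for all $y$ and the fixed-point lemma.
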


\begin{proof}
Let $e$ be the zero of $S$ and let $x\in X$ be such that $\im e=\{x\}$. Since $S$ is nilpotent, there exists $m\in\mathbb{N}$ such that $S^m=\{e\}$.

Suppose, with the aim of obtaining a contradiction, that $\bigcup_{\beta\in S} \im \beta = X$. 

Let $x_1 \in X\setminus\{x\}$. There exist $\beta_1 \in S$ and $x_2 \in X$ such that $x_1=x_2\beta_1$. By Proposition~\ref{property nilpotent semigroups}(1), $x\beta_1=x\neq x_1=x_2\beta_1$, which implies that $x_2\in X \setminus\{x\}$. Continuing in this way, construct a sequence $(x_n)_{n\in\mathbb{N}}$ of elements of $X\setminus\{x\}$ and a sequence $(\beta_n)_{n\in\mathbb{N}}$ of elements of $S$ that verify $x_n=x_{n+1}\beta_n$ for all $n\in\mathbb{N}$. Since $X$ is finite, then there exist $i<j$ such that $x_i=x_j$ and $x_i=x_{i+1}\beta_i=x_{i+2}\beta_{i+1}\beta_i=\ldots=x_j\beta_{j-1}\cdots\beta_{i}=x_i\beta_{j-1}\cdots\beta_{i}$. Consequently, $x_i=x_i\beta_{j-1}\cdots\beta_{i}=x_i(\beta_{j-1}\cdots\beta_{i})^2=\ldots=x_i(\beta_{j-1}\cdots\beta_{i})^m=x_ie=x$, which is a contradiction.

Therefore $\bigcup_{\beta\in S} \im \beta \subsetneq X$.
\end{proof}

\begin{definition}
Let $S$ be a nilpotent subsemigroup of $\mathcal{T}(X)$ whose zero has rank $1$. Let $e$ be the zero of $S$. Given a partition $\{A_j\}_{j=0}^k$ of $X$, we say that $\{A_j\}_{j=0}^k$ is an {\it $S$-partition of $X$} if
\begin{align*} &A_0=\im e \\
&A_j=\left\{x\in X\setminus \bigcup_{l=0}^{j-1}A_l: x\beta\in\bigcup_{l=0}^{j-1} A_l \textrm{ for all } \beta \in S\right\}, \textrm{ } j=1,\ldots,k.\end{align*}
\end{definition}

Note that, from construction, given a nilpotent subsemigroup $S$ of $\mathcal{T}(X)$ whose zero has rank $1$, there is at most one $S$-partition of $X$. We will prove in Proposition~\ref{S-partition} below that an $S$-partition always exists, but first we illustrate the definition with an example.

\begin{example} \label{example S-partition}
We consider the semigroup $\mathcal{T}_6$ of full transformations over $\{1,2,3,4,5,6\}$. Let $S$ be the subsemigroup of $\mathcal{T}_6$ formed by the following transformations:
\begin{displaymath}\left(\begin{array}{cccccc} 1&2&3&4&5&6 \\ 5&5&5&5&5&5\end{array}\right), \left(\begin{array}{cccccc}1&2&3&4&5&6 \\ 5&5&1&5&5&5\end{array}\right), \left(\begin{array}{cccccc}1&2&3&4&5&6 \\ 5&1&2&1&5&1\end{array}\right)\end{displaymath}
\begin{displaymath}
\left(\begin{array}{cccccc}1&2&3&4&5&6 \\ 5&1&4&1&5&1\end{array}\right), \left(\begin{array}{cccccc}1&2&3&4&5&6 \\ 5&1&6&1&5&1\end{array}\right).
\end{displaymath}

Notice that the first transformation is the zero of the semigroup and has rank $1$. Furthermore, the product of any three transformations is equal to the zero of $S$. It is straightforward to verify that $S$ is a commutative semigroup. Hence $S$ is a commutative nilpotent semigroup.

We are going to determine the $S$-partition of $\{1,2,3,4,5,6\}$. The set $A_0$ is equal to the image of the zero of $S$, which implies that $A_0=\{5\}$. The set $A_1$ is formed by all the elements of $\{1,2,3,4,5,6\}\setminus A_0=\{1,2,3,4,6\}$ whose image in all the transformations of $S$ belongs to $A_0$, that is, whose image is always $5$. Since $1$ is the only element with that property, we have $A_1=\{1\}$. The set $A_2$ is formed by all the elements of $\{1,2,3,4,5,6\}\setminus (A_0\cup A_1)=\{2,3,4,6\}$ whose image in the transformations of $S$ always belongs to $A_0\cup A_1$, that is, whose image is either $1$ or $5$. Hence $A_2=\{2,4,6\}$. The set $A_3$ is formed by the remaining element of $\{1,2,3,4,5,6\}$, that is, $A_3=\{3\}=\{1,2,3,4,5,6\}\setminus (A_0\cup A_1\cup A_2)$. Notice that the image of $3$ in the transformations of $S$ always belongs to $A_0\cup A_1\cup A_2=\{1,2,4,5,6\}$. Since $A_0\cup A_1\cup A_2\cup A_3=\{1,2,3,4,5,6\}$, then $\{A_j\}_{j=0}^3$ is the $S$-partition of $\{1,2,3,4,5,6\}$.
\end{example}

\begin{proposition}\label{S-partition}
Let $S$ be a nilpotent subsemigroup of $\mathcal{T}(X)$ whose zero has rank $1$. Then there exists an $S$-partition of $X$. 
\end{proposition}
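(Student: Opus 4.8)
The plan is to build the blocks $A_0, A_1, A_2, \ldots$ one at a time using the recursive formula in the definition and to show that this process exhausts $X$ after finitely many steps. Since each $A_j$ with $j\geqslant 1$ is by definition a subset of $X\setminus\bigcup_{l=0}^{j-1}A_l$, the blocks are automatically pairwise disjoint; hence the family always partitions the set $B_j:=\bigcup_{l=0}^{j}A_l$ that it covers, and the whole statement reduces to showing that $B_k=X$ for some $k$. As $X$ is finite, it is enough to check that the construction never stalls prematurely, that is, that a new nonempty block can always be adjoined until $X$ is covered.

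So the heart of the proof is the following claim: if $B:=\bigcup_{l=0}^{j-1}A_l\subsetneq X$, then $A_j\neq\emptyset$. I would prove this by contradiction, in the same spirit as Proposition~\ref{exists x not in union of images}. Suppose $A_j=\emptyset$; unwinding the definition, this says that for every $x\in X\setminus B$ there is some $\beta\in S$ with $x\beta\notin B$, that is, $x\beta\in X\setminus B$.

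Starting from an arbitrary $x_1\in X\setminus B$, I would then use this property repeatedly to produce a sequence $(x_n)$ in $X\setminus B$ and a sequence $(\beta_n)$ in $S$ with $x_{n+1}=x_n\beta_n$, so that $x_{n+1}=x_1\beta_1\cdots\beta_n$ for all $n$. Since $S$ is nilpotent there is some $m$ with $S^m=\{e\}$; as $\beta_1\cdots\beta_m$ is a product of $m$ elements of $S$, it equals $e$, whence $x_{m+1}=x_1(\beta_1\cdots\beta_m)=x_1 e\in\im e=A_0\subseteq B$, contradicting $x_{m+1}\in X\setminus B$. This establishes the claim.

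Finally, to conclude: starting from $A_0=\im e$ and applying the claim, as long as $B_{j-1}\neq X$ the next block $A_j$ is nonempty and so $B_j$ strictly contains $B_{j-1}$; because $X$ is finite this strict growth must stop, which can only happen when $B_k=X$ for some $k$. The resulting family $\{A_j\}_{j=0}^k$ is then an $S$-partition of $X$. I expect the main obstacle to be the claim, whose proof hinges on exploiting nilpotency to collapse a long orbit trapped in $X\setminus B$ down into $\im e$; once the trapped-orbit idea is in hand, the argument is the finite-set version of the reasoning already used for Proposition~\ref{exists x not in union of images}.
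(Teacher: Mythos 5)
Your proof is correct, but it takes a genuinely different route from the paper. The paper argues by induction on $|X|$: it invokes Proposition~\ref{exists x not in union of images} to find a point $t$ lying in no image, restricts $S$ to $X\setminus\{t\}$ to get an $S'$-partition by the induction hypothesis, and then reinserts $t$ either into an existing block $A_r$ or as a new final block, with a case analysis on the least $r$ such that $t\beta\in\bigcup_{l=0}^{r-1}A_l$ for all $\beta$. You instead build the blocks directly from the recursive definition and reduce everything to the non-stalling claim that $A_j\neq\emptyset$ whenever $\bigcup_{l<j}A_l\subsetneq X$, which you prove with a forward trapped-orbit argument: a point all of whose $S$-iterates stay outside $B$ would contradict $x_1(\beta_1\cdots\beta_m)=x_1e\in\im e\subseteq B$. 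Your claim is essentially a strengthening of Proposition~\ref{exists x not in union of images} adapted to an arbitrary down-set $B$ containing $\im e$, and your use of $S^m=\{e\}$ is cleaner than the paper's backward construction, which needs a pigeonhole step to close a cycle before nilpotency can be applied. What your approach buys is the avoidance of both the induction and the delicate verification that the extended family still satisfies the defining equalities after inserting $t$; what the paper's approach buys is reuse of an already-proven proposition and an explicit picture of how the partition grows as points are added. One small point worth making explicit in a final write-up: the blocks of an $S$-partition must be nonempty for the family to be a partition, and your claim is exactly what guarantees this, since you stop the construction the moment $X$ is exhausted.
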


\begin{proof}
Let $e$ be the zero of $S$ and $i\in X$ be such that $\im e=\{i\}$. 

Let $n=|X|$. We are going to prove the result by induction on $n$.

Suppose that $n=1$. Then $S=\left\{\left(\begin{array}{c} i\\ i\end{array}\right)\right\}=\{e\}$ and $X=\{i\}=\im e$. Thus $\{\im e\}$ is an $S$-partition of $X$.

Suppose that $n\geqslant 2$ and assume that the result is valid for $n-1$.

Since $S$ is a nilpotent semigroup then, by Proposition~\ref{exists x not in union of images}, there exists $t\in X\setminus\bigcup_{\beta\in S}\im \beta$, which implies that $\beta|_{X\setminus\{t\}}\in\mathcal{T}(X\setminus\{t\})$ for all $\beta\in S$. It is easy to see that $S'=\{\beta|_{X\setminus\{t\}}: \beta\in S\}$ is a nilpotent subsemigroup of $\mathcal{T}(X\setminus\{t\})$ whose zero is $e|_{X\setminus\{t\}}$. The rank of $e|_{X\setminus\{t\}}$ is also $1$ and $\im e|_{X\setminus\{t\}}=\{i\}=\im e$. By the induction hypothesis, $X\setminus\{t\}$ admits an $S'$-partition $\{A_j\}_{j=0}^{k}$, where $A_0=\im e|_{X\setminus\{t\}}=\im e$ and, for all $j\in\{1,\ldots,k\}$,
\begin{align*}
A_j&=\left\{x\in (X\setminus \{t\})\setminus \bigcup_{l=0}^{j-1}A_l: x\beta\in\bigcup_{l=0}^{j-1} A_l \textrm{ for all } \beta \in S'\right\}\\
&=\left\{x\in (X\setminus \{t\})\setminus \bigcup_{l=0}^{j-1}A_l: x\beta|_{X\setminus\{t\}}\in\bigcup_{l=0}^{j-1} A_l \textrm{ for all } \beta \in S\right\}\\
&=\left\{x\in (X\setminus \{t\})\setminus \bigcup_{l=0}^{j-1}A_l: x\beta\in\bigcup_{l=0}^{j-1} A_l \textrm{ for all } \beta \in S\right\}.\end{align*}

From the definition of $t$, we have $t\neq i$ and $t\beta\neq t$ for all $\beta\in S$. Let
\begin{displaymath} r=\min\left\{s\in\{1,\ldots,k+1\}: t\beta\in\bigcup_{l=0}^{s-1}A_l \textrm{ for all } \beta \in S\right\}.\end{displaymath} 

We want to construct an $S$-partition of $X$ from the $S'$-partition $\{A_j\}_{j=0}^k$ of $X\setminus\{t\}$. We will either create a new set $A_{k+1}$ formed exclusively by $t$, or add $t$ to one of the existing sets of $\{A_j\}_{j=0}^k$. The way we extend the partition of $X\setminus\{t\}$ depends on the value of $r$ defined above and is chosen so that the new partition is an $S$-partition of $X$.

We consider two cases. First, suppose that $r=k+1$. This implies that there exists $\beta \in S$ such that $t\beta \notin \bigcup_{l=0}^{k-1}A_l$. Consequently, 
\begin{displaymath}A_j=\left\{x\in X\setminus \bigcup_{l=0}^{j-1}A_l: x\beta\in\bigcup_{l=0}^{j-1} A_l \textrm{ for all } \beta \in S\right\}\end{displaymath}
for all $j\in\{1,\ldots,k\}$. Let $A_{k+1}=\{t\}$. Hence 
\begin{displaymath}A_{k+1}=\left\{x\in X\setminus \bigcup_{l=0}^{k}A_l: x\beta\in\bigcup_{l=0}^{k} A_l \textrm{ for all } \beta \in S\right\}\end{displaymath}
and $\{A_j\}_{j=0}^{k+1}$ is an $S$-partition of $X$.

Suppose that $r\leqslant k$. Let $A'_r=A_r\cup\{t\}$ and $A'_j=A_j$ for all $j\in\{0,\ldots,k\}\setminus\{r\}$. We also have $t\beta \notin \bigcup_{l=0}^{r-2}A'_l$ for some $\beta \in S$. Then
\begin{displaymath}A'_j=\left\{x\in X\setminus \bigcup_{l=0}^{j-1}A'_l: x\beta\in\bigcup_{l=0}^{j-1} A'_l \textrm{ for all } \beta \in S\right\}\end{displaymath}
for all $j\in\{1,\ldots,k\}$. Thus $\{A'_j\}_{j=0}^{k}$ is an $S$-partition of $X$.
\end{proof}

The concept of $S$-partition plays a key role in the proof of Lemma~\ref{main lemma}, which explains how commutativity restricts the structure of the maps of $S$. This result is the key idea used to determine the maximum size of a commutative nilpotent subsemigroup of $\mathcal{T}(X)$.

\begin{lemma}\label{main lemma}
Let $S$ be a commutative nilpotent subsemigroup of $\mathcal{T}(X)$ whose zero has rank $1$, and $\{A_j\}_{j=0}^{k}$ be the $S$-partition of $X$. Let $i\in\{1,\ldots,k\}$ and define $A=\bigcup_{j=0}^{i-1}A_j$. Let $x\in A_i$ and $\beta_1,\ldots,\beta_m \in S$ be such that $\beta_1|_A=\cdots=\beta_m|_A$. Then $(x\beta_1)\gamma=\cdots=(x\beta_m)\gamma$ for all $\gamma \in S$.
\end{lemma}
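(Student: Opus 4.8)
The plan is to exploit commutativity in order to move the point at which we evaluate into the set $A$, where the hypothesis $\beta_1|_A=\cdots=\beta_m|_A$ can be applied directly. The naive approach---namely, observing that $x\beta_s\in A$ for each $s$ (since $x\in A_i$) and then applying $\gamma$---fails, because the points $x\beta_1,\ldots,x\beta_m$ need not coincide, and $\gamma$ is not assumed to act uniformly on $A$. The useful idea is instead to rewrite the product so that $\gamma$ acts \emph{first}.

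Concretely, fix $\gamma\in S$ and $s\in\{1,\ldots,m\}$. Since $S$ is commutative, $\beta_s\gamma=\gamma\beta_s$, and therefore
\[
(x\beta_s)\gamma=x(\beta_s\gamma)=x(\gamma\beta_s)=(x\gamma)\beta_s.
\]
The point of this rewriting is that the argument $x\gamma$ no longer depends on $s$. Next I would invoke the defining property of the $S$-partition: because $x\in A_i$ with $i\geqslant 1$ and $\gamma\in S$, the definition of $A_i$ gives $x\gamma\in\bigcup_{l=0}^{i-1}A_l=A$. Thus $x\gamma$ is a single, fixed element of $A$, to which the hypothesis directly applies: as $\beta_1,\ldots,\beta_m$ all agree on $A$, we obtain $(x\gamma)\beta_1=\cdots=(x\gamma)\beta_m$. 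Combining this with the displayed identity yields $(x\beta_s)\gamma=(x\gamma)\beta_s=(x\gamma)\beta_1$ for every $s$, whence $(x\beta_1)\gamma=\cdots=(x\beta_m)\gamma$, as required.

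The only genuine obstacle is spotting the commutativity swap. The hypothesis controls the maps $\beta_s$ on $A$, but \emph{a priori} we only know that $x\beta_s$ lands in $A$, and applying $\gamma$ afterwards does not obviously interact with the hypothesis at all. Recognizing that commuting $\gamma$ past $\beta_s$ converts $(x\beta_s)\gamma$ into $(x\gamma)\beta_s$---with a single, $s$-independent argument $x\gamma$ that the $S$-partition forces into $A$---is exactly what makes both the partition property and the agreement hypothesis usable simultaneously. Everything after this observation is a one-line substitution.
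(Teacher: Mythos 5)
Your proof is correct and follows exactly the same route as the paper's: use commutativity to rewrite $(x\beta_s)\gamma$ as $(x\gamma)\beta_s$, note that the definition of the $S$-partition forces $x\gamma\in A$, and then apply the hypothesis that the $\beta_s$ agree on $A$. No issues.
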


\begin{proof}
Let $\gamma \in S$ and $l,t\in\{1,\ldots,m\}$. Since $x\in A_i$, then $x\gamma \in \bigcup_{j=0}^{i-1}A_j=A$. Hence, since $S$ is commutative, we have $(x\beta_l)\gamma=(x\gamma)\beta_l=(x\gamma)\beta_l|_A=(x\gamma)\beta_t|_A =(x\gamma)\beta_t=(x\beta_t)\gamma$.
\end{proof}

\begin{theorem}\label{|S|<=xi(n), zero rank 1}
Let $S$ be a commutative nilpotent subsemigroup of $\mathcal{T}(X)$ whose zero has rank $1$, and let $\{A_j\}_{j=0}^{k}$ be the $S$-partition of $X$. Then there exists a null subsemigroup $N$ of $\mathcal{T}(X)$ such that $|S|=|N|$.
\end{theorem}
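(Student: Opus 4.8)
The plan is to argue by induction on $|X|$, constructing the desired null semigroup $N$ explicitly rather than deducing it from a size bound. It is tempting to first prove $|S|\leqslant(|X|)\xi$ and then take a subsemigroup of the correct size inside a maximum null semigroup (any subset of a null semigroup containing its zero is again a null semigroup, so every size up to the maximum is realized). However, that route is circular here: the inequality $|S|\leqslant(|X|)\xi$ is precisely what one obtains \emph{from} this theorem, by combining $|S|=|N|$ with Theorem~\ref{maximum size null semigroup}. So the construction of $N$ must be direct. The base case $|X|=1$ is immediate, since then $S=\{e\}$ and $e$ itself is a null semigroup.

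For the inductive step, I would use Proposition~\ref{exists x not in union of images} to pick a point $t\in X\setminus\bigcup_{\beta\in S}\im\beta$; such a $t$ necessarily sits in a top level $A_i$ of the $S$-partition, with $t\beta\neq t$ and $t\beta\in A:=\bigcup_{j<i}A_j$ for every $\beta$. Restriction then yields a commutative nilpotent semigroup $S'=\{\beta|_{X\setminus\{t\}}:\beta\in S\}$ on the smaller set $X\setminus\{t\}$, with the same rank-$1$ zero, to which the induction hypothesis applies and produces a null semigroup $N'\leqslant\mathcal{T}(X\setminus\{t\})$ with $|N'|=|S'|$. The restriction map $S\to S'$ is surjective, and two maps in the same fibre agree off $t$, so they are distinguished solely by the value $t\beta$; writing $f(\sigma)$ for the number of distinct values $t\beta$ takes over the fibre above $\sigma$, one gets $|S|=\sum_{\sigma\in S'}f(\sigma)$. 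Here Lemma~\ref{main lemma} enters: since all maps in a fibre agree on $A$, the lemma forces the values $t\beta$ in that fibre to act identically under every $\gamma\in S$, i.e.\ they form a set of pairwise ``post-equivalent'' elements of $A$, which is what bounds $f(\sigma)$.

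I would then build $N$ by reintroducing $t$ as a free point for $N'$: fixing a bijection between $N'$ and $S'$, each $\nu\in N'$ is extended to $f(\sigma)$ transformations on $X$ that agree with $\nu$ off $t$ and send $t$ to $f(\sigma)$ distinct elements of the target set $Y'$ of $N'$ (the set $\bigcup_{\nu\in N'}\im\nu$). Because $t\nu\in Y'$ and every map already collapses $Y'$ to the zero's image, these extensions still have all products equal to the zero, so $N$ is null, and $|N|=\sum_{\sigma}f(\sigma)=|S|$. The main obstacle is the bookkeeping that makes this extension possible: one must guarantee enough room, namely $f(\sigma)\leqslant|Y'|$ for every $\sigma$, so that $t$ can be routed to $f(\sigma)$ distinct targets simultaneously with the null condition. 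I expect this to require strengthening the induction hypothesis to track the structure of $N'$ (its target set $Y'$, equivalently its free points) and to choose $t$ and $N'$ compatibly, using the post-equivalence bound on the fibres coming from Lemma~\ref{main lemma} to control $\max_\sigma f(\sigma)$ against $|Y'|$.
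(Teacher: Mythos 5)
Your inductive reorganization (peel off a point $t\in X\setminus\bigcup_{\beta\in S}\im\beta$, restrict, apply the induction hypothesis, then reattach $t$) is a reasonable reshuffling of the paper's argument, which instead builds a prefix tree from all of $S$ at once and performs a global surgery on it; your fibres over $\sigma\in S'$ correspond exactly to the branchings at the last level of that tree, and your ``post-equivalence'' observation from Lemma~\ref{main lemma} is precisely the content of Lemma~\ref{i_1,...,i_s<i, i_2,...,i_s linear levels}. Your opening remark about why one cannot shortcut via $|S|\leqslant(|X|)\xi$ is also correct. (A minor slip: $t$ need not lie in the \emph{top} level $A_k$ of the $S$-partition, only in some $A_i$ with $i\geqslant 1$; this does not affect anything.)

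However, there is a genuine gap, and it sits exactly where you flag it: the inequality $f(\sigma)\leqslant|Y'|$ is never established, and it is the heart of the proof. What Lemma~\ref{main lemma} gives you directly is that the $f(\sigma)$ values $t\beta$ in a fibre are pairwise post-equivalent, which yields only $f(\sigma)-1$ ``determined'' (linear) positions --- the analogue of Lemma~\ref{i_1,...,i_s<i, i_2,...,i_s linear levels}. To guarantee $f(\sigma)$ admissible targets for $t$ (points $z$ with $z\nu$ equal to the zero's image for every $\nu\in N'$, so that routing $t$ to $z$ preserves nullity) you need the stronger count of Lemma~\ref{branching preceded by s linear levels}: $s$ such positions, not $s-1$. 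In the paper that extra $+1$ requires a separate, non-trivial case analysis --- either the determined levels all lie outside the trunk, in which case the trunk (of length at least $1$, since $|A_0|=1$) supplies the missing one, or one of them meets the trunk, in which case a second application of Lemma~\ref{main lemma} forces all of them into the trunk. You neither state the precise strengthened induction hypothesis (what invariant of $N'$ is being tracked, and how it relates to the fibre data of $S$ over $S'$) nor prove the resulting inequality; ``I expect this to require strengthening the induction hypothesis'' defers the one step that does not follow from bookkeeping. Without it, the construction can fail a priori: $N'$ might offer only $f(\sigma)-1$ distinct targets for $t$, and the extension to a null semigroup of size $|S|$ breaks down.
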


\begin{proof}
The idea of the proof is to construct a labelled tree using the semigroup $S$, modify the tree and obtain a new one which will be the labelled tree of a null semigroup of size $|S|$. In order to obtain a labelled tree from $S$, we first associate each transformation of $S$ to a word of length $|X|$ over $X$, and then use these words to create a tree. Lemma~\ref{main lemma} is what assures that we can modify the tree the way we do.  Finally, we can obtain a new set of words from the new tree and see that the transformations associated to those words form a null semigroup. (For an illustration of how the proof applies to a particular semigroup, see Example~\ref{example proof}.)

Let $n=|X|$.

For simplicity, we start by reordering the elements of $X$ in a way such that the elements of $A_j$ appear before the elements of $A_{j+1}$ for all $j\in\{0,\ldots,k-1\}$. Assume that, after reordering, the elements of $X$ are sequenced in the following way: $x_1,\ldots,x_n$.

Using the order $x_1,\ldots,x_n$ and the semigroup $S$, we are going to create a set of words over $X$. Each transformation $\beta\in S$ determines the word $w_\beta$ of length $n$ over $X$ whose $i$-th letter is $x_i\beta$. Let $W=\{w_{\beta}: \beta\in S\}$ be the set of words determined by $S$. Note that $|W|=|S|$.

We are going to construct a labelled tree using the set of words $W$. The set of vertices of the tree is the set of prefixes of the words belonging to $W$, that is, the set of vertices is $\{u\in X^*: uv \in W \textrm{ for some } v\in X^*\}$. Each arc is labelled with a letter from the alphabet $X$ and, given two vertices $u$ and $v$, we have an arc from $u$ to $v$ labelled by the letter $x$ if and only if $ux=v$.

Observe that, as a consequence of the way we defined the tree, the vertex $\varepsilon$ is the only one whose indegree is zero, which means that the vertex $\varepsilon$ is the root of the tree.

Let $\beta \in S$ and assume that $w_\beta=w_1\cdots w_n$, where $w_1,\ldots,w_n\in\{x_1,\ldots,\allowbreak x_n\}$. If we start at the vertex $\varepsilon$ and we follow the path formed by the arcs labelled by $w_1,\ldots,w_n$, we end up at the vertex $w_\beta=w_1\cdots w_n$, which corresponds to a leaf of the tree. Hence each one of the leaves is associated with a unique word of $W$ (and, consequently, a unique transformation of $S$). Therefore the tree has $|S|$ leaves.

Since $(A_0\cup A_1)\beta=\{x_1\}$ for all $\beta \in S$, then $x_1, x_1^2, \ldots, x_1^{|A_0\cup A_1|}$ are prefixes of all the words in $W$ and, consequently, for each $i\in\{1,\ldots,|A_0\cup A_1|\}$ $x_1^i$ is the only vertex of length $i$. In the tree, this translates into a path of length $|A_0\cup A_1|$ that begins at the vertex $\varepsilon$ (the root of the tree) and ends at the vertex $x_1^{|A_0\cup A_1|}$, and where all the arcs have label $x_1$. We call this path the {\it trunk} of the tree. Since $|A_0|=1$, the length of the trunk is at least $1$. All the paths labelled by some word $w_\beta \in W$ starting at the root (vertex $\varepsilon$) and ending at a leaf (the one associated with $\beta$) contain the trunk of the tree. 

Assume that we have at least two transformations $\beta_1,\beta_2$ of $S$ that are equal in $\{x_1,\ldots,x_i\}$ and different in $\{x_{i+1}\}$ (meaning that $x_{i+1}\beta_1\neq x_{i+1}\beta_2$) for some $i\in\{1,\ldots,n-1\}$. Then the words $w_{\beta_1},w_{\beta_2}$ determined by $\beta_1,\beta_2$ share the same prefixes of length between $0$ and $i$, and the prefix of length $i+1$ is different for each one of the words. Hence, in the tree of $S$, the paths labelled by $w_{\beta_1},w_{\beta_2}$ starting at the root and ending at the leaves $w_{\beta_1},w_{\beta_2}$ coincide on the first $i$ arcs and diverge on the $(i+1)$-th arc. This means that the starting vertex of the $(i+1)$-th arcs of these paths has outdegree at least $2$. In cases like this one, where we have a vertex with outdegree at least $2$, that is, a vertex that has at least $2$ arcs starting in it, we say that a {\it branching} occurs.

Let $i\in\{1,\ldots,n\}$. Consider all the arcs of the tree whose starting vertex is a word of length $i-1$ and ending vertex is a word of length $i$. We say that those arcs form the {\it level} $x_i$ of the tree. If there is at least one branching at level $x_i$ (which happens if there exists a vertex that is a word of length $i-1$ whose outdegree is at least $2$), then we call it a {\it branching level}. If no branching occurs at level $x_i$ (that is, if all the vertices that are words of length $i-1$ have outdegree $1$), then we call it a {\it linear level}.

Note that the levels of the trunk of the tree (the first $|A_0\cup A_1|$ levels) are all linear and contain exactly one arc.

Given an arc of the level $x_i$, its label corresponds to the $i$-th letter of some word $w_\beta$ of $W$, which is equal to $x_i\beta$. Notice that if $i>1$ then, because of the way we ordered the elements of $X$, the labels of the arcs of the level $x_i$ belong to $\{x_1,\ldots,x_{i-1}\}$.

The next two lemmata are a consequence of Lemma~\ref{main lemma} and provide some properties of the tree of $S$ that relate the notions of branching and linear level. Lemma~\ref{i_1,...,i_s<i, i_2,...,i_s linear levels} allow us to modify the tree of $S$ and Lemma~\ref{branching preceded by s linear levels} guarantees that the new tree can be that of a null semigroup.

\begin{lemma}\label{i_1,...,i_s<i, i_2,...,i_s linear levels}
Suppose that we have a branching at level $x_i$ for some $i\in\{2,\ldots,n\}$. Let $s\geqslant 2$ be the number of arcs in that branching, and $x_{i_1},\ldots, x_{i_s}$ be the labels of those arcs (where $i_1<i_2<\cdots<i_s$). Then $i_s<i$ and the levels $x_{i_2},\ldots,x_{i_s}$ are linear.
\end{lemma}

\begin{proof}
The labels of the arcs of the level $x_i$ belong to $\{x_1,\ldots,x_{i-1}\}$. Hence $x_{i_s}\in\{x_1,\ldots,x_{i-1}\}$ and, consequently, $i_s<i$.

The existence of a branching at level $x_i$ with $s$ arcs, whose labels are $x_{i_1},\ldots,x_{i_s}$, implies the existence of $s$ transformations of $S$, $\beta_1,\ldots,\beta_s$, that are equal in $\{x_1,\ldots,x_{i-1}\}$ and such that $x_i\beta_1,\ldots,x_i\beta_s \in \{x_{i_1},\ldots,x_{i_s}\}$ and are pairwise distinct. Assume, without loss of generality, that $x_i\beta_j=x_{i_j}$ for all $j\in\{1,\ldots,s\}$.

Let $l\in\{1,\ldots,k\}$ be such that $x_i\in A_l$. Because of the way we ordered the elements of $X$, $\bigcup_{j=0}^{l-1}A_j\subseteq \{x_1,\ldots,x_{i-1}\}$. Hence $\beta_1,\ldots,\beta_s$ are equal in $\bigcup_{j=0}^{l-1}A_j$.

We want to see that the levels $x_{i_2},\ldots,x_{i_s}$ are linear. Let $j\in\{2,\ldots,s\}$. Let $u$ be a vertex that is a word of length $i_j-1$. Then $u$ is the starting vertex of some arc of the level $x_{i_j}$. Choose one of the arcs whose starting vertex is $u$ and let $\gamma\in S$ be a transformation whose path starting at the root, ending at a leaf and labelled by $w_\gamma$ contains the chosen arc. Then the label of that arc is equal to $x_{i_j}\gamma$ and the label of the arc of the level $x_{i_1}$ belonging to that path is equal to $x_{i_1}\gamma$. By Lemma~\ref{main lemma}, $x_i\beta_1\gamma=x_i\beta_j\gamma$ and, consequently, $x_{i_1}\gamma=x_{i_j}\gamma$. Therefore the only arc with starting vertex $u$ is the one with label $x_{i_1}\gamma$. Thus $u$ has outdegree $1$.

We just proved that all the starting vertices of the arcs of the level $x_{i_j}$ have outdegree $1$. Thus the level $x_{i_j}$ is linear. Since $j$ is an arbitrary element of $\{2,\ldots,s\}$, then the levels $x_{i_2},\ldots,x_{i_s}$ are all linear.
\end{proof}

As a consequence of Lemma~\ref{i_1,...,i_s<i, i_2,...,i_s linear levels}, we have that a branching with $s$ arcs is associated to $s$ levels that precede it: the first one can either be a linear or a branching level and the last $s-1$ are all linear levels.

\begin{lemma}\label{branching preceded by s linear levels}
A branching with $s$ arcs is preceded by at least $s$ linear levels.
\end{lemma}

\begin{proof}
Assume that we have a branching at level $x_i$ whose arcs have labels $x_{i_1},\ldots,x_{i_s}$ (where $i_1<i_2<\cdots<i_s<i$). By Lemma~\ref{i_1,...,i_s<i, i_2,...,i_s linear levels}, the levels $x_{i_2},\ldots,x_{i_s}$ are all linear and precede the branching. Thus the branching is preceded by at least $s-1$ linear levels.

Suppose that the levels $x_{i_2},\ldots,x_{i_s}$ are all outside of the trunk of the tree. Since the trunk of the tree has at least one level, and all the levels of the trunk are linear and antecede the branching, then the branching is preceded by at least $s$ linear levels.

Suppose that there is at least one level, among the levels $x_{i_2},\ldots,x_{i_s}$, that belongs to the trunk of the tree. Then the level $x_{i_1}$ is also part of the trunk, which implies that the level $x_{i_1}$ contains only one arc whose label is $x_1$.

Let $u$ be an arc of the level $x_{i_s}$. There exists $\gamma\in S$ such that the path labelled by $w_\gamma$, starting at the root and ending at a leaf, contains $u$, which has label $x_{i_s}\gamma$. This path also contains the only arc from the level $x_{i_1}$ whose label is $x_1$. Since the level $x_{i_1}$ is a part of the trunk, then $x_{i_1}\in A_0\cup A_1$, which implies that $x_{i_1}\gamma=x_1$.

Let $l\in\{1,\ldots,k\}$ be such that $x_i\in A_l$. Let $\beta_1,\beta_s\in S$ be such that $\beta_1$ and $\beta_s$ are equal in $\{x_1,\ldots,x_{i-1}\}$, $x_i\beta_1=x_{i_1}$ and $x_i\beta_s=x_{i_s}$. These transformations exist because $x_{i_1}$ and $x_{i_s}$ are labels of arcs that are part of a branching at level $x_i$. We have $\bigcup_{j=0}^{l-1}A_l\subseteq \{x_1,\ldots,x_{i-1}\}$ because of the way we rearranged the elements of $X$ and, as a consequence, $\beta_1$ and $\beta_s$ are equal in $\bigcup_{j=0}^{l-1}A_l$. Then, by Lemma~\ref{main lemma}, $x_1=x_{i_1}\gamma=x_i\beta_1\gamma=x_i\beta_s\gamma=x_{i_s}\gamma$ and $u$ has label $x_1$. Since $u$ is an arbitrary arc of the level $x_{i_s}$, then all the arcs of the level $x_{i_s}$ are labelled by $x_1$. Hence $x_{i_s}\beta=x_1$ for all $\beta\in S$ and, consequently, $x_{i_s}\in A_0\cup A_1$. Thus the level $x_{i_s}$ is part of the trunk and, since the levels $x_{i_2},\ldots,x_{i_{s-1}}$ precede the level $x_{i_s}$, they are also part of the trunk. Therefore the trunk has at least $s$ linear levels and the branching is preceded by at least $s$ linear levels.
\end{proof}

With the aim of finding a null semigroup of size $|S|$, we are going to modify the tree of $S$ and obtain a new tree. We will then see that this tree is that of a null semigroup of size $|S|$. In order to guarantee that the new semigroup has size $|S|$, we make sure that the modifications we apply to the tree of $S$ do not change its number of leaves (which is equal to $|S|$).

We start by deleting the labels of the arcs. Then we consider all the linear levels that do not correspond to the trunk of the tree. Assume that there are $m$ linear levels in the tree, $m'$ of which are the linear levels outside of the trunk. Then $m$ is equal to the sum of $m'$ and the number of arcs in the trunk. We are going to move those $m'$ linear levels to the trunk of the tree, that is, we are going to eliminate all the arcs that correspond to those levels, and we are going to add $m'$ arcs to the trunk of the tree (that is, we are adding $m'$ linear levels to the trunk). Of course, if the tree of $S$ has all its linear levels in the trunk, then we do not need to perform any changes in the tree. Note that, since all the starting vertices of the arcs belonging to the linear levels  have outdegree $1$, then eliminating linear levels does not cause any problems in the tree. This entire process does not change neither the number of leaves of the tree, neither the number of linear and branching levels of the tree. Furthermore, these transformations do not create new branchings and maintain the number of arcs of the existing ones. This means that each branching of the new tree was also a branching of the tree of $S$ (and it has the same number of arcs).

Now we just need to add labels to the arcs and rename the vertices of the new tree, in a way that guarantees that this is a tree of a semigroup. We start by labelling the arcs. All the $m$ arcs belonging to the trunk of the tree are labelled by $x_1$. We now consider the starting vertices of the arcs that do not belong to the trunk of the new tree. We want to label these arcs using exclusively elements from $\{x_1,\ldots,x_m\}$. If we have a vertex with outdegree $1$ then we label the corresponding arc by $x_1$. Assume now that we have a vertex with outdegree $s\geqslant 2$. Then we have a branching at that vertex. According to Lemma~\ref{branching preceded by s linear levels}, in the tree of $S$ this branching was preceded by $s$ linear levels, which are all part of the trunk of the new tree. Hence $s\leqslant m$ and we label the arcs of this branching by $x_1,\ldots,x_s$.

Finally, we rename the vertices. We want the vertices to be the prefixes of the words associated with the leaves, which should be words of length $n$. Hence the root of the tree needs to be the word $\varepsilon$. We also want to guarantee that, given two vertices $u$ and $v$, there is an arc labelled by $x$ from $u$ to $v$ if and only if $v=ux$. Hence the vertices that are not the root must be given by $wx$, where $x$ is the label of the only arc that ends at the vertex we are considering and $w$ is the starting vertex of that arc.

Let $Z$ be the set of words formed by the labels of the leaves of the new tree. Note that we have $|S|$ words, all of which have length $n$. Using again the order $x_1,\ldots,x_n$ of the elements of $X$, we are going to obtain from each word of $Z$ a transformation of $\mathcal{T}(X)$. Let $w=w_1\cdots w_n\in Z$ (where $w_1,\ldots,w_n \in \{x_1,\ldots,x_n\}$). Then $w$ determines the transformation $\beta\in \mathcal{T}(X)$ such that $x_i\beta=w_i$. Let $N$ be the set formed by the transformations obtained from $Z$. We want to prove that $N$ is a null semigroup. First, we notice that $x_1^n\in Z$. Hence the constant map $e$ with image $\{x_1\}$ belongs to $N$. Let $\beta,\gamma\in N$ and $x\in X$. Since the labels of the arcs of the new tree belong to $\{x_1,\ldots,x_m\}$, then $Z\subseteq \{x_1,\ldots,x_m\}^*$ and, consequently, $x\beta\in\{x_1,\ldots,x_m\}$. However, at the trunk of the new tree, the arcs are all labelled $x_1$, which implies that $x_1^m$ is a prefix of all the words in $Z$. Therefore $\{x_1,\ldots,x_m\}\gamma=\{x_1\}$ and, as a consequence, $x\beta\gamma=x_1$. Thus $\beta\gamma=e$.

Therefore $N$ is a null subsemigroup of $\mathcal{T}(X)$ such that $|N|=|Z|=|S|$.
\end{proof}

According to Theorem~\ref{|S|<=xi(n), zero rank 1}, given a commutative nilpotent subsemigroup $S$ of $\mathcal{T}(X)$ whose zero has rank $1$, there exists a null subsemigroup $N$ of $\mathcal{T}(X)$ that has size $|S|$. However, by Theorem~\ref{maximum size null semigroup}, $N$ has at most $(|X|)\xi$ transformations, which means that $S$ also has at most $(|X|)\xi$ transformations. Thus the maximum size of a commutative nilpotent subsemigroup of $\mathcal{T}(X)$ is $(|X|)\xi$.

\begin{example}\label{example proof}
The present example serves as a way to show how the proof of Theorem~\ref{|S|<=xi(n), zero rank 1} works. In order to do that, we consider again the semigroup $S$ from Example~\ref{example S-partition}. We saw that $\{A_j\}_{j=0}^3$ is the $S$-partition of $\{1,2,3,4,5,6\}$, where $A_0=\{5\}$, $A_1=\{1\}$, $A_2=\{2,4,6\}$ and $A_3=\{3\}$.

We want to choose an order of the elements of $\{1,2,3,4,5,6\}$ such that the element of $A_0$ is the first one in that order, the second one is the element of $A_1$, followed by the three elements of $A_2$ (in any order) and the last element is the one belonging to $A_3$. A possible way of ordering the elements is $5,1,2,4,6,3$.

The set of words we obtain from the transformations of $S$ (using the order $5,1,2,4,6,3$) is
\begin{displaymath}W=\{555555, 555551, 551112, 551114, 551116\}\subseteq \{1,2,3,4,5,6\}^*,\end{displaymath}
which allow us to construct the tree in Figure~\ref{tree of S}.

\begin{figure}[hbt]
\begin{center}
    \begin{tikzpicture}[x=8mm,y=8mm]

    \draw[color=gray!50, dashed, thick](4.9,2) rectangle (6.1,1);
    
    \draw[color=gray!50, dashed, thick](4.9,-0.8) rectangle (6.1,-2.2);
    \draw[color=gray!50, dashed, thick](1.9,1.6) rectangle (3.1,-1.6);

    \draw[decorate,decoration={brace, amplitude=6pt, mirror}] (0,-1) -- (2,-1);
    \node at (1,-1.5) {\small Trunk};

    \draw[decorate,decoration={brace, amplitude=6pt, mirror}] (5,-3) -- (6,-3);
    \node at (5.5,-3.5) {\small Branching};

    \draw[decorate,decoration={brace, amplitude=6pt, mirror}] (2,-3) -- (3,-3);
    \node at (2.5,-3.5) {\small Branching};

    \draw[decorate,decoration={brace, amplitude=6pt}] (5,3) -- (6,3);
    \node at (5.5,3.5) {\small Branching};

    \begin{scope}[
      every node/.style={treenode},
      ]
      \node (root) at (0,0) {};
      \node (1) at (1,0) {};
      \node (11) at (2,0) {};
      \node (111) at (3,1.5) {};
      \node (1111) at (4,1.5) {};
      \node (11111) at (5,1.5) {};
      \node (111111) at (6,1.75) {};
      \node (111112) at (6,1.25) {};
      \node (112) at (3,-1.5) {};
      \node (1122) at (4,-1.5) {};
      \node (11222) at (5,-1.5) {};
      \node (112223) at (6,-1) {};
      \node (112224) at (6,-1.5) {};
      \node (112225) at (6,-2) {};
    \end{scope}

    \node[anchor=east] at (root) {\small $\varepsilon$};
    \node[anchor=south] at (1) {\rotatebox{90}{\small $5$}};
    \node[anchor=south] at (11) {\rotatebox{90}{\small $55$}};
    \node[anchor=south] at (111) {\rotatebox{90}{\small $555$}};
    \node[anchor=south] at (1111) {\rotatebox{90}{\small $5555$}};
    \node[anchor=south] at (11111) {\rotatebox{90}{\small $55555$}};
    \node[anchor=north] at (112) {\rotatebox{90}{\small $551$}};
    \node[anchor=north] at (1122) {\rotatebox{90}{\small $5511$}};
    \node[anchor=north] at (11222) {\rotatebox{90}{\small $55111$}};
    \node[anchor=west] at (111111) {$555555$};
    \node[anchor=west] at (111112) {$555551$};
    \node[anchor=west] at (112223) {$551112$};
    \node[anchor=west] at (112224) {$551114$};
    \node[anchor=west] at (112225) {$551116$};

    \begin{scope}[
      ->,
      every node/.style={edgelabel},
      ]
      \draw (root) -- node[above] {$5$} (1);
      \draw (1) -- node[above] {$5$} (11);
      \draw (11) -- node[anchor=south east] {$5$} (111);
      \draw (111) -- node[above] {$5$} (1111);
      \draw (1111) -- node[above] {$5$} (11111);
      \draw (11111) -- node[above] {$5$} (111111);
      \draw (11111) -- node[below] {$1$} (111112);
      \draw (11) -- node[anchor=south west] {$1$} (112);
      \draw (112) -- node[above] {$1$} (1122);
      \draw (1122) -- node[above] {$1$} (11222);
      \draw (11222) -- node[anchor=south east] {$2$} (112223);
      \draw (11222) -- node[pos=.75,above] {$4$} (112224);
      \draw (11222) -- node[anchor=north east] {$6$} (112225);
    \end{scope}

    \foreach \x in {8.6,9.6,10.6,11.6,12.6,13.6,14.6} {
      \draw[black,dashed] (\x,2.25) --  (\x,-2.5);
    }
    \foreach \x/\t in {8.6/L,9.6/L,10.6/B,11.6/L,12.6/L,13.6/B} {
      \node[columnlabel] at ({\x+.5},-2.5) {\t};
    }
    \foreach \x/\t in {8.6/5,9.6/1,10.6/2,11.6/4,12.6/6,13.6/3} {
      \node[columnlabel] at ({\x+.5},2.75) {\t};
    }
    \node at (11.6,3.25) {\small Levels};
    \node at (11.6,-3.5) {\small Linear (L) and Branching (B) Levels};

    \begin{scope}[
      every node/.style={treenode},
      ]
      \node (root) at (8.6,0) {};
      \node (1) at (9.6,0) {};
      \node (11) at (10.6,0) {};
      \node (111) at (11.6,1.5) {};
      \node (1111) at (12.6,1.5) {};
      \node (11111) at (13.6,1.5) {};
      \node (111111) at (14.6,1.75) {};
      \node (111112) at (14.6,1.25) {};
      \node (112) at (11.6,-1.5) {};
      \node (1122) at (12.6,-1.5) {};
      \node (11222) at (13.6,-1.5) {};
      \node (112223) at (14.6,-1) {};
      \node (112224) at (14.6,-1.5) {};
      \node (112225) at (14.6,-2) {};
    \end{scope}

    \begin{scope}[
      ->,
      every node/.style={edgelabel},
      ]
      \draw (root) -- (1);
      \draw (1) -- (11);
      \draw (11) -- (111);
      \draw (111) -- (1111);
      \draw (1111) -- (11111);
      \draw (11111) -- (111111);
      \draw (11111) -- (111112);
      \draw (11) -- (112);
      \draw (112) -- (1122);
      \draw (1122) -- (11222);
      \draw (11222) -- (112223);
      \draw (11222) -- (112224);
      \draw (11222) -- (112225);
    \end{scope}
  \end{tikzpicture}
\caption{Tree of $S$.}
\label{tree of S}
\end{center}
\end{figure}
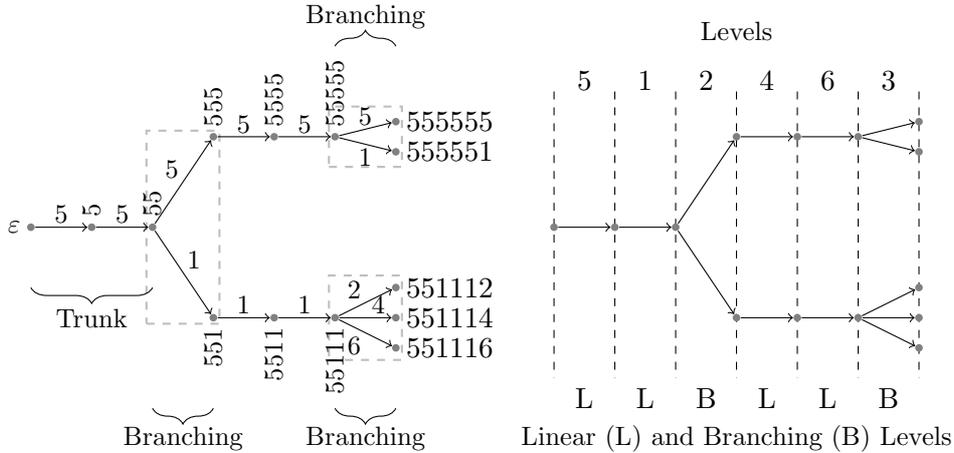

In Figure~\ref{tree of S}, the image on the left is the tree of $S$ with the arcs and vertices labelled. The trunk and branchings of the tree are also identified. The trunk of the tree corresponds to the path from the root (vertex $\varepsilon$) to the vertex $55$. The branchings of the tree are the ones associated with the vertices $55$, $55555$ and $55111$, and are marked by dashed rectangles.

Also in Figure~\ref{tree of S}, in the image on the right the levels of the tree of $S$ are indicated at the top of the tree, and at the bottom are distinguished the linear and branching levels of the tree.

Now we are going to perform some transformations in tree of $S$ in order to obtain a new tree. We consider the two linear levels that are not a part of the trunk of the tree of $S$, namely the levels $4$ and $6$. The idea is to remove those linear levels from the tree (that is, we are going to delete the arcs belonging to levels $4$ and $6$ --- the ones in bold in the tree on the left in Figure~\ref{transforming the tree of S}), and then add two linear levels to the trunk of the tree (that is, we are going to add two arcs to the trunk --- the ones in bold in the tree on the right in Figure~\ref{transforming the tree of S}).

\begin{figure}[hbt]
\begin{center}
\begin{tikzpicture}[x=8mm, y=8mm]

    \foreach \x in {0,1,2,3,4,5,6} {
      \draw[black,dashed] (\x,2.25) --  (\x,-2.5);
    }
    \foreach \x/\t in {0/5,1/1,2/2,3/4,4/6,5/3} {
      \node[columnlabel] at ({\x+.5},2.75) {\t};
    }
    \foreach \x/\t in {0/L,1/L,2/B,3/L,4/L,5/B} {
      \node[columnlabel] at ({\x+.5},-2.5) {\t};
    }

    \begin{scope}[
      every node/.style={treenode},
      ]
      \node (root) at (0,0) {};
      \node (1) at (1,0) {};
      \node (11) at (2,0) {};
      \node (111) at (3,1.5) {};
      \node (1111) at (4,1.5) {};
      \node (11111) at (5,1.5) {};
      \node (111111) at (6,1.75) {};
      \node (111112) at (6,1.25) {};
      \node (112) at (3,-1.5) {};
      \node (1122) at (4,-1.5) {};
      \node (11222) at (5,-1.5) {};
      \node (112223) at (6,-1) {};
      \node (112224) at (6,-1.5) {};
      \node (112225) at (6,-2) {};
    \end{scope}

    \begin{scope}[
      ->,
      every node/.style={edgelabel},
      ]
      \draw (root) -- (1);
      \draw (1) -- (11);
      \draw (11) -- (111);
      \draw[line width=.5mm] (111) -- (1111);
      \draw[line width=.5mm] (1111) -- (11111);
      \draw (11111) -- (111111);
      \draw (11111) -- (111112);
      \draw (11) -- (112);
      \draw[line width=.5mm] (112) -- (1122);
      \draw[line width=.5mm] (1122) -- (11222);
      \draw (11222) -- (112223);
      \draw (11222) -- (112224);
      \draw (11222) -- (112225);
    \end{scope}

\draw[->, decorate, decoration=snake] (6.5,0) -- (8.5,0);
\node[columnlabel] at (7.5,-0.4) {\small Moving};
\node[columnlabel] at (7.5,-0.9) {\small the linear};
\node[columnlabel] at (7.5,-1.4) {\small levels to};
\node[columnlabel] at (7.5,-1.9) {\small the trunk};

    \foreach \x in {9,10,11,12,13,14,15} {
      \draw[black,dashed] (\x,2.25) --  (\x,-2.5);
    }
    \foreach \x/\t in {9/5,10/1,11/2,12/4,13/6,14/3} {
      \node[columnlabel] at ({\x+.5},2.75) {\t};
    }
    \foreach \x/\t in {9/L,10/L,11/L,12/L,13/B,14/B} {
      \node[columnlabel] at ({\x+.5},-2.5) {\t};
    }

    \begin{scope}[
      every node/.style={treenode},
      ]
      \node (root) at (9,0) {};
      \node (1) at (10,0) {};
      \node (11) at (11,0) {};
      \node (111) at (12,0) {};
      \node (1111) at (13,0) {};
      \node (11111) at (14,1.5) {};
      \node (111111) at (15,1.75) {};
      \node (111112) at (15,1.25) {};
      \node (11112) at (14,-1.5) {};
      \node (111123) at (15,-1) {};
      \node (111124) at (15,-1.5) {};
      \node (111125) at (15,-2) {};
    \end{scope}

    \begin{scope}[
      ->,
      every node/.style={edgelabel},
      ]
      \draw (root) -- (1);
      \draw (1) -- (11);
      \draw[line width=.5mm] (11) -- (111);
      \draw[line width=.5mm] (111) -- (1111);
      \draw (1111) -- (11111);
      \draw (11111) -- (111111);
      \draw (11111) -- (111112);
      \draw (1111) -- (11112);
      \draw (11112) -- (111123);
      \draw (11112) -- (111124);
      \draw (11112) -- (111125);
    \end{scope}

\end{tikzpicture}
\caption{Transforming the tree of $S$.}
\label{transforming the tree of S}
\end{center}
\end{figure}
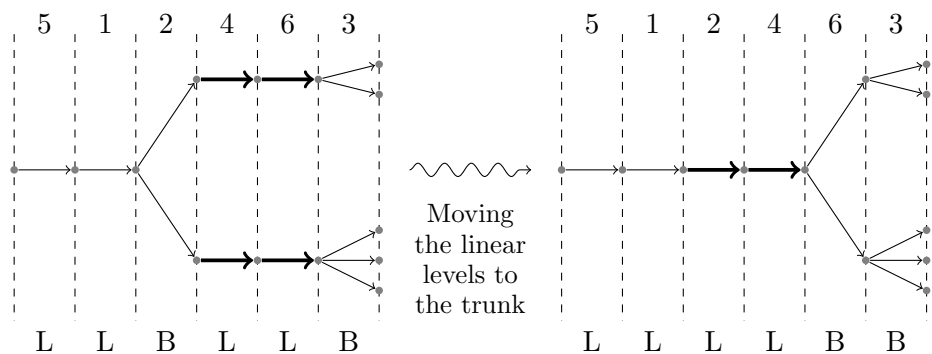

Finally, we just need to relabel the arcs and vertices of the tree we obtained in Figure~\ref{transforming the tree of S}. Figure~\ref{new tree (of a null semigroup)} shows the new labelled tree.

\begin{figure}[htb]
\begin{center}
      \begin{tikzpicture}[x=8mm, y=8mm]

    \begin{scope}[
      every node/.style={treenode},
      ]
      \node (root) at (0,0) {};
      \node (1) at (1,0) {};
      \node (11) at (2,0) {};
      \node (111) at (3,0) {};
      \node (1111) at (4,0) {};
      \node (11111) at (5,1.5) {};
      \node (111111) at (6,1.75) {};
      \node (111112) at (6,1.25) {};
      \node (11112) at (5,-1.5) {};
      \node (111123) at (6,-1) {};
      \node (111124) at (6,-1.5) {};
      \node (111125) at (6,-2) {};
    \end{scope}

    \node[anchor=east] at (root) {\small $\varepsilon$};
    \node[anchor=south] at (1) {\rotatebox{90}{\small $5$}};
    \node[anchor=south] at (11) {\rotatebox{90}{\small $55$}};
    \node[anchor=south] at (111) {\rotatebox{90}{\small $555$}};
    \node[anchor=south] at (1111) {\rotatebox{90}{\small $5555$}};
    \node[anchor=south] at (11111) {\rotatebox{90}{\small $55555$}};
    \node[anchor=west] at (111111) {$555555$};
    \node[anchor=west] at (111112) {$555551$};
    \node[anchor=north] at (11112) {\rotatebox{90}{\small $55551$}};
    \node[anchor=west] at (111123) {$555515$};
    \node[anchor=west] at (111124) {$555511$};
    \node[anchor=west] at (111125) {$555512$};

    \begin{scope}[
      ->,
      every node/.style={edgelabel},
      ]
      \draw (root) -- node[above] {$5$} (1);
      \draw (1) -- node[above] {$5$} (11);
      \draw (11) -- node[above] {$5$} (111);
      \draw (111) -- node[above] {$5$} (1111);
      \draw (1111) -- node[anchor=south east] {$5$} (11111);
      \draw (11111) -- node[anchor=south east] {$5$} (111111);
      \draw (11111) -- node[anchor=north east] {$1$} (111112);
      \draw (1111) -- node[anchor=south west] {$1$} (11112);
      \draw (11112) -- node[anchor=south east] {$5$} (111123);
      \draw (11112) -- node[pos=.75,above] {$1$} (111124);
      \draw (11112) -- node[anchor=north east] {$2$} (111125);
    \end{scope}

  \end{tikzpicture}
\caption{New tree obtained from the tree of $S$.}
\label{new tree (of a null semigroup)}
\end{center}
\end{figure}
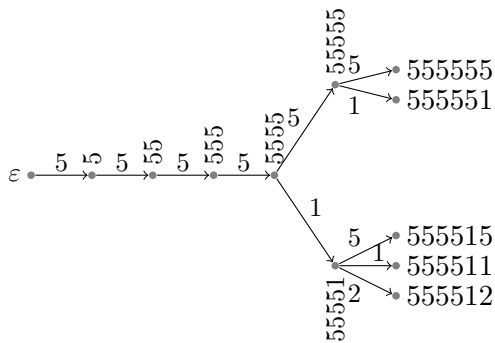

This new tree gives us the set of words
\begin{displaymath}Z=\{555555, 555551, 555515, 555511, 555512\}\subseteq \{1,2,3,4,5,6\}^*.\end{displaymath}
Using the words from $Z$ and the order $5,1,2,4,6,3$, we obtain the transformations
\begin{displaymath}\left(\begin{array}{cccccc} 1&2&3&4&5&6 \\ 5&5&5&5&5&5\end{array}\right), \left(\begin{array}{cccccc}1&2&3&4&5&6 \\ 5&5&1&5&5&5\end{array}\right), \left(\begin{array}{cccccc}1&2&3&4&5&6 \\ 5&5&5&5&5&1\end{array}\right)\end{displaymath}
\begin{displaymath}
\left(\begin{array}{cccccc}1&2&3&4&5&6 \\ 5&5&1&5&5&1\end{array}\right), \left(\begin{array}{cccccc}1&2&3&4&5&6 \\ 5&5&2&5&5&1\end{array}\right).
\end{displaymath}
We can easily check that the product of any two transformations is equal to the first transformation, which is the zero of this new semigroup. Hence we obtained a null subsemigroup of $\mathcal{T}_6$ with as many elements as $S$.
\end{example}

As a consequence of Theorem~\ref{|S|<=xi(n), zero rank 1} we know that the maximum size of a commutative nilpotent subsemigroup of $\mathcal{T}(X)$ whose zero has rank $1$ is $(|X|)\xi$. Theorem~\ref{|S|<xi(n), zero rank > 1} complements this result by examining the size of these semigroups when the zero has rank at least $2$. 

\begin{theorem}\label{|S|<xi(n), zero rank > 1}
Let $S$ be a commutative nilpotent subsemigroup of $\mathcal{T}(X)$ whose zero has rank at least $2$. Then
\begin{enumerate}
    \item If $|X|=2$, then $|S|=(|X|)\xi=(2)\xi=1$ and $S$ is a trivial semigroup. Therefore $S$ is a null semigroup;
    \item If $|X|\geqslant 3$, then $|S|<(|X|)\xi$. 
\end{enumerate}
\end{theorem}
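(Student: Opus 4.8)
The plan is to reduce the rank-$\geqslant 2$ case to the rank-$1$ case already settled in Theorem~\ref{|S|<=xi(n), zero rank 1} by decomposing $X$ into the fibres of the zero $e$ of $S$. Write $\im e=\{x_1,\dots,x_r\}$ with $r=\operatorname{rank}(e)\geqslant 2$, and set $F_i=x_ie^{-1}$; these sets partition $X$. Proposition~\ref{property nilpotent semigroups} is exactly what makes this useful: part~(1) gives $x_i\beta=x_i$ and part~(2) gives $y\beta\in F_i$ whenever $y\in F_i\setminus\{x_i\}$, so that $F_i\beta\subseteq F_i$ for every $\beta\in S$ and every $i$. Thus each $\beta$ restricts to a transformation $\beta|_{F_i}\in\mathcal{T}(F_i)$.

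Next I would show that the restriction map $\beta\mapsto(\beta|_{F_1},\dots,\beta|_{F_r})$ is an injective homomorphism from $S$ into the direct product $\prod_{i=1}^r S|_{F_i}$, where $S|_{F_i}=\{\beta|_{F_i}:\beta\in S\}$. Injectivity is immediate because the $F_i$ cover $X$, and it is a homomorphism because each $F_i$ is $S$-invariant. Hence $|S|\leqslant\prod_{i=1}^r|S|_{F_i}|$. Each $S|_{F_i}$ is a homomorphic image of $S$, so it is again commutative and nilpotent, with zero $e|_{F_i}$ the constant map onto $\{x_i\}$, which has rank $1$. Therefore the rank-$1$ bound (the maximum order $(|F_i|)\xi$ deduced from Theorem~\ref{|S|<=xi(n), zero rank 1} together with Theorem~\ref{maximum size null semigroup}) applies inside $\mathcal{T}(F_i)$ and gives $|S|_{F_i}|\leqslant(n_i)\xi$, where $n_i=|F_i|$.

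Setting $n=|X|=\sum_{i=1}^r n_i$ with each $n_i\geqslant 1$, I would then combine these bounds with the submultiplicativity inequality of Lemma~\ref{inequalities xi}(2), applying $(a)\xi(b)\xi\leqslant(a+b-1)\xi$ repeatedly to telescope the product:
\begin{displaymath}
|S|\leqslant\prod_{i=1}^r(n_i)\xi\leqslant\Bigl(\sum_{i=1}^r n_i-(r-1)\Bigr)\xi=(n-r+1)\xi.
\end{displaymath}
Since $r\geqslant 2$, we have $n-r+1\leqslant n-1$. For $|X|=2$ this forces $r=2$ and $n-r+1=1$, so $|S|\leqslant(1)\xi=1$; as $S$ contains its zero, $|S|=1$ and $S$ is the trivial (hence null) semigroup, while $(2)\xi=1$ as well. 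For $|X|\geqslant 3$ we have $n-1\geqslant 2$, so Lemma~\ref{inequalities xi}(1) gives $(n-r+1)\xi\leqslant(n-1)\xi<(n)\xi$, whence $|S|<(|X|)\xi$.

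The conceptual crux---and the only place requiring real thought---is the fibrewise decomposition and the reduction to the rank-$1$ theorem; once $S$ is seen to embed into the product of its fibre restrictions, the remaining estimate is a routine telescoping of Lemma~\ref{inequalities xi}. One minor point to check carefully is that $\xi$ is non-decreasing, so that $(n-r+1)\xi\leqslant(n-1)\xi$ is legitimate; this follows from Lemma~\ref{inequalities xi}(1).
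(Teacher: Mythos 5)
Your proposal is correct and follows essentially the same route as the paper: decompose $X$ into the fibres of the zero, embed $S$ into the product of its fibre restrictions, invoke the rank-$1$ bound on each factor, and telescope via Lemma~\ref{inequalities xi}(2). The only (immaterial) difference is that the paper handles $|X|=2$ directly from Proposition~\ref{property nilpotent semigroups}(1) rather than as a degenerate case of the product bound.
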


\begin{proof}
Assume that $e$ is the zero of $S$.

Suppose that $|X|=2$. Then $X=\im e$ and, by Proposition~\ref{property nilpotent semigroups}(1), $x\beta=x=xe$ for all $x\in X$ and $\beta\in S$. Therefore $S=\{e\}$ and the result follows.

Now suppose that $|X|\geqslant 3$. Let $x\in \im e$. Define $I_x=xe^{-1}$. By Proposition~\ref{property nilpotent semigroups}, $I_x\beta\subseteq I_x$ for all $\beta \in S$ and, consequently, $\beta|_{I_x} \in \mathcal{T}(I_x)$ for all $\beta \in S$. It is easy to see that $S_x=\{\beta|_{I_x}: \beta \in S\}$ is a commutative nilpotent subsemigroup of $\mathcal{T}(I_x)$ whose zero is $e|_{I_x}$, which has rank $1$. Therefore, by Theorem~\ref{|S|<=xi(n), zero rank 1}, $|S_x|\leqslant (|I_x|)\xi$.

Let $\varphi: S \longrightarrow \prod_{x\in \im e} S_x$ be the map which sends $\beta \in S$ to the tuple whose $x$-th component is $\beta|_{I_x}$. We are going to prove that $\varphi$ is injective. Let $\beta, \gamma \in S$ be such that $(\beta)\varphi=(\gamma)\varphi$. This implies that $\beta|_{I_x}=(x)(\beta)\varphi=(x)(\gamma)\varphi=\gamma|_{I_x}$ for all $x\in\im e$ and, consequently, $\beta=\gamma$ (because $\{I_x\}_{x\in\im e}$ is a partition of $X$).

We have
\begin{align*}
|S|&\leqslant \prod_{x\in \im e} |S_x|& \textrm{[because } \varphi \textrm{ is injective]}\\
&\leqslant \prod_{x\in \im e} (|I_x|)\xi& \textrm{[by Theorem~\ref{|S|<=xi(n), zero rank 1}]}\\
&\leqslant \left(\left(\sum_{x\in \im e} |I_x|\right)-|\im e|+1\right)\xi \kern -8.7mm & \textrm{[by iterated use of Lemma~\ref{inequalities xi}(2)]}\\
&= (|X|-|\im e|+1)\xi& \textrm{[because } \{I_x\}_{x\in\im e} \textrm{ is a partition of } X \textrm{]}\\
&\leqslant (|X|-1)\xi& \textrm{[because } |\im e|\geqslant 2 \textrm{ and by Lemma~\ref{inequalities xi}(1)]}\\
&< (|X|)\xi,& \textrm{[by Lemma~\ref{inequalities xi}(1)]}
\end{align*}
which proves (2).
\end{proof}

Theorem~\ref{|S|<=xi(n), zero rank 1} guarantees that, for each commutative nilpotent subsemigroup of $\mathcal{T}(X)$ whose zero has rank $1$, there exists a null subsemigroup of $\mathcal{T}(X)$ of the same size. Note that, as a consequence of Theorem~\ref{|S|<xi(n), zero rank > 1}, this is also true when the zero of the semigroup has rank at least $2$.

Theorems~\ref{|S|<=xi(n), zero rank 1} and \ref{|S|<xi(n), zero rank > 1} assert that the maximum size of a commutative nilpotent subsemigroup of $\mathcal{T}(X)$ is $(|X|)\xi$. The next theorem describes these semigroups of size $(|X|)\xi$ and shows that they are precisely the null semigroups described in Theorem~\ref{null semigroups of maximum size}.

\begin{theorem} \label{Commutative nilpotent semigroups of maximum size are null semigroups}
Let $S$ be a commutative nilpotent subsemigroup of $\mathcal{T}(X)$ of maximum size. Then $S$ is a null semigroup.
\end{theorem}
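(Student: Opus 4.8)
The plan is to first reduce to the case in which the zero of $S$ has rank $1$, and then to revisit the tree of $S$ constructed in the proof of Theorem~\ref{|S|<=xi(n), zero rank 1}, extracting from it a counting bound sharp enough that the hypothesis $|S|=(|X|)\xi$ pins down the shape of the tree completely. By Theorems~\ref{|S|<=xi(n), zero rank 1} and~\ref{|S|<xi(n), zero rank > 1} the maximum size of a commutative nilpotent subsemigroup of $\mathcal{T}(X)$ is $(|X|)\xi$, so $|S|=(|X|)\xi$. If the zero of $S$ had rank at least $2$, then Theorem~\ref{|S|<xi(n), zero rank > 1} would give either $|X|=2$, in which case $S$ is the trivial semigroup and hence null, or $|X|\geqslant 3$ and $|S|<(|X|)\xi$, contradicting maximality. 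Thus I may assume the zero $e$ of $S$ has rank $1$, say $\im e=\{x_1\}$, and I fix the $S$-partition $\{A_j\}_{j=0}^{k}$, the ordering $x_1,\ldots,x_n$ of $X$, and the tree of $S$ exactly as in the proof of Theorem~\ref{|S|<=xi(n), zero rank 1}.

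Next I would derive a sharp count from the tree. Writing $n_j$ for the number of vertices at depth $j$ (so $n_0=1$ and $n_n=|S|$), a linear level leaves $n_j$ unchanged, while at a branching level $n_j=\sum_v d(v)$, where each outdegree $d(v)$ is at most the number of linear levels preceding that level, and so at most the total number $q$ of linear levels, by Lemma~\ref{branching preceded by s linear levels}. Multiplying the per-level ratios gives $|S|\leqslant q^{\,n-q}\leqslant(n)\xi$, so both inequalities are in fact equalities. Equality in the first forces every vertex of every branching level to have outdegree exactly $q$, and equality in Lemma~\ref{branching preceded by s linear levels} (each branching now has $q$ arcs and is preceded by at least $q$ linear levels, while only $q$ exist) forces all the linear levels to precede every branching; hence the levels $x_1,\ldots,x_q$ are precisely the linear ones and $x_{q+1},\ldots,x_n$ the branching ones.

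Applying Lemma~\ref{i_1,...,i_s<i, i_2,...,i_s linear levels} to a branching with its $q$ distinct labels $x_{i_1}<\cdots<x_{i_q}$, the levels $x_{i_2},\ldots,x_{i_q}$ are linear; since the linear levels occupy exactly positions $1,\ldots,q$ and $i_1\geqslant 1$, this forces $\{i_1,\ldots,i_q\}=\{1,\ldots,q\}$. Thus every branching vertex carries precisely the label set $\{x_1,\ldots,x_q\}$, so $x_i\beta\in\{x_1,\ldots,x_q\}$ for $i>q$; and since the unique label at a linear level $x_i$ lies in $\{x_1,\ldots,x_{i-1}\}\subseteq\{x_1,\ldots,x_q\}$, I obtain $\im\beta\subseteq\{x_1,\ldots,x_q\}$ for every $\beta\in S$. (If $S$ has no branching level then $|S|=1=(|X|)\xi$, which forces $|X|\leqslant 2$ and $S=\{e\}$, already null; so I may assume $q<n$ and that level $x_{q+1}$ is a branching level.)

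The main obstacle is the last step: proving $\{x_1,\ldots,x_q\}=A_0\cup A_1$, i.e.\ that no linear level lies outside the trunk, for then $(A_0\cup A_1)\gamma=\{x_1\}$ for every $\gamma$ gives $x\beta\gamma=x_1$ for all $x\in X$ and $\beta,\gamma\in S$, so $S^2=\{e\}$ and $S$ is null. This is exactly where commutativity must be invoked once more. Suppose some $x_{i_0}$ with $i_0\leqslant q$ is not always sent to $x_1$; being a linear level it has a single image $c_{i_0}\neq x_1$, so $x_{i_0}\gamma=c_{i_0}$ for all $\gamma\in S$. Since the single vertex at level $x_{q+1}$ realises every label in $\{x_1,\ldots,x_q\}$, there is $\beta\in S$ with $x_{q+1}\beta=x_{i_0}$, and for each $v\in\{x_1,\ldots,x_q\}$ there is $\gamma\in S$ with $x_{q+1}\gamma=v$. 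Commutativity then gives $v\beta=(x_{q+1}\gamma)\beta=x_{q+1}\gamma\beta=x_{q+1}\beta\gamma=(x_{q+1}\beta)\gamma=x_{i_0}\gamma=c_{i_0}$, so $v\beta=c_{i_0}$ for every such $v$; taking $v=x_1$ and using Proposition~\ref{property nilpotent semigroups}(1) yields $c_{i_0}=x_1\beta=x_1$, a contradiction. Hence $\{x_1,\ldots,x_q\}=A_0\cup A_1$, and the nullity of $S$ follows as indicated.
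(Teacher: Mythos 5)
Your proof is correct, but the heart of it runs along a genuinely different line from the paper's. Both arguments reduce to the rank-$1$ case, build the tree of $S$ from the proof of Theorem~\ref{|S|<=xi(n), zero rank 1}, and lean on Lemmata~\ref{i_1,...,i_s<i, i_2,...,i_s linear levels} and~\ref{branching preceded by s linear levels}; but where the paper pins down $T_S$ indirectly --- passing to the null semigroup $N$ produced by the tree modification, invoking the classification of maximum-size null semigroups (Theorem~\ref{null semigroups of maximum size}) to determine $T_N$ exactly, and then arguing that a linear level outside the trunk of $T_S$ would leave the first branching of $T_N$ with too few arcs --- you extract the bound $|S|\leqslant q^{\,n-q}\leqslant(n)\xi$ directly from the tree ($q$ being the number of linear levels) and squeeze the equality case: every branching vertex must have outdegree exactly $q$, all $q$ linear levels must precede every branching, and Lemma~\ref{i_1,...,i_s<i, i_2,...,i_s linear levels} then forces the label set of every branching to be exactly $\{x_1,\ldots,x_q\}$. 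The one genuinely new ingredient you need --- that each of the first $q$ levels is labelled $x_1$, so that $\{x_1,\ldots,x_q\}=A_0\cup A_1$ --- you obtain by the direct commutativity computation $v\beta=(x_{q+1}\gamma)\beta=(x_{q+1}\beta)\gamma=x_{i_0}\gamma$ together with Proposition~\ref{property nilpotent semigroups}(1); this step is sound because, once all linear levels are known to come first, there is a unique vertex at each depth $\leqslant q$, which both legitimises the ``single image $c_{i_0}$'' of a linear level and guarantees that every label in $\{x_1,\ldots,x_q\}$ is realised at the depth-$q$ branching. Your route is more self-contained: it uses neither Theorem~\ref{null semigroups of maximum size} nor the explicit construction of $N$, and it re-derives the upper bound $(n)\xi$ as a by-product. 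The paper's route is shorter given the machinery already in place, since the trunk of $T_S$ automatically consists of the elements of $A_0\cup A_1$ and no separate argument about the linear-level labels is needed. Your treatment of the degenerate cases ($q=n$, i.e.\ no branching level, and the zero of rank at least $2$) is also correct.
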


\begin{proof}
Let $n=|X|$. According to Theorems~\ref{|S|<xi(n), zero rank > 1} and \ref{|S|<=xi(n), zero rank 1}, the maximum size of a commutative nilpotent subsemigroup of $\mathcal{T}(X)$ is $(n)\xi$. Thus $|S|=(n)\xi$.

Suppose that the zero of $S$ has rank at least $2$. Then, by Theorem~\ref{|S|<xi(n), zero rank > 1}, $n=2$ and $S$ is a null semigroup.

Suppose now that the zero of $S$ has rank $1$. We are going to use the proof of Theorem~\ref{|S|<=xi(n), zero rank 1} to prove the result. Let $\{A_i\}_{j=0}^{k}$ be the $S$-partition of $X$ and consider the order $x_1,\ldots,x_n$ of the elements of $X$ used to construct the tree of $S$.

Let $N$ be the null subsemigroup of $\mathcal{T}(X)$ obtained from $S$ by modifying the tree of $S$. Let $T_S$ and $T_N$ be the trees of $S$ and $N$, respectively. Since $|S|=(n)\xi$ then $|N|=(n)\xi$, which implies (by Theorem~\ref{null semigroups of maximum size}) that
$$N=\{\beta \in \mathcal{T}(X): \{x_1,\ldots,x_{(n)\alpha}\}\beta=\{x_1\} \textrm{ and } \im \beta\subseteq\{x_1,\ldots,x_{(n)\alpha}\}\}.$$
Thus $T_N$ has a trunk with $(n)\alpha$ arcs and the starting vertices of the arcs of the levels $x_{(n)\alpha+1},\ldots,x_n$ have outdegree $(n)\alpha$ (which means that a branching with $(n)\alpha$ arcs occurs at those vertices). Also, all the linear levels of $T_N$ are in the trunk, which means $T_N$ has $(n)\alpha$ linear levels (the levels $x_1,\ldots,x_{(n)\alpha}$) and $n-(n)\alpha$ branching levels (the levels $x_{(n)\alpha+1},\ldots,x_n$).

In order to obtain $T_N$ from $T_S$, the only thing we do (besides changing the labels of the arcs and renaming the vertices) is move the linear levels of $T_S$, that are not in the trunk, to the trunk of the tree (assuming that there are any linear levels outside the trunk of $T_S$). This means that, in the process of transforming the tree $T_S$ into the tree of $T_N$, we do not change the number of linear levels. Therefore $T_S$ and $T_N$ have the same number of linear levels, which is equal to $(n)\alpha$.

Assume, with the aim of obtaining a contradiction, that there is at least one linear level outside the trunk of $T_S$. This implies that the number of arcs of the trunk of $T_S$ is less than $(n)\alpha$. Consider the branching closest to the root of the tree, that is, the branching at the end of the trunk. By Lemma~\ref{branching preceded by s linear levels} the number of linear levels that occur before a branching is not smaller than the number of arcs of that branching. Hence the number of arcs of the branching at the end of the trunk of $T_S$ is at most $(n)\alpha-1$. When we transform $T_S$ into $T_N$, we do not change the number of arcs of that branching (we just move linear levels to the trunk). This implies that the branching closest to the root of the tree $T_N$ has less than $(n)\alpha$ arcs, which is a contradiction. Thus, all the linear levels of $T_S$ are associated with its trunk. Consequently, we do not need to move any of the linear levels of $T_S$, which means that the structure of $T_S$ is equal to the structure of $T_N$ (that is, $T_S$ and $T_N$ are equal except, possibly, the labels of the arcs and vertices).

We now know that the trunk of $T_S$ has $(n)\alpha$ arcs, all the linear levels of $T_S$ correspond to its trunk, and a branching with $(n)\alpha$ arcs occurs in all vertices that are words of length between $(n)\alpha$ and $n-1$. The arcs of the trunk are associated with the elements of $A_0\cup A_1$ and are all labelled $x_1$ (the linear levels associated with the trunk are the levels $x_1,\ldots,x_{(n)\alpha}$). Every branching has $(n)\alpha$ arcs and their labels correspond to the levels with which they are associated with. According to Lemma~\ref{i_1,...,i_s<i, i_2,...,i_s linear levels}, $(n)\alpha-1$ of those levels are linear and there is an extra level that precedes them. The only linear levels in $T_S$ are the levels $x_1,\ldots,x_{(n)\alpha}$, which means that the $(n)\alpha$ arcs of each branching are labelled with $x_1,\ldots,x_{(n)\alpha}$. Hence
\begin{displaymath}S=\left\{\beta \in \mathcal{T}(X): \{x_1,\ldots,x_{(n)\alpha}\}\beta=\{x_1\} \textrm{ and } \im \beta\subseteq\{x_1,\ldots,x_{(n)\alpha}\}\right\}=N\end{displaymath}
and, consequently, $S$ is a null semigroup.
\end{proof}

\bibliography{commutative_nilpotent_semigroups} 
\bibliographystyle{alpha}

\end{document}